\newtheorem {theorem} {Theorem}
\newtheorem {definition} {Definition}
\newtheorem {lemma}  [theorem]{Lemma}
\newtheorem {remark} [theorem]{Remark}
\begin{document}
\renewcommand{\arraystretch}{1.5}

\title[Sliding trajectories are closed]{A proof that all the sliding trajectories of generic inelastic piecewise linear dynamical systems over the torus are closed}
\author[M. D. A. Caldas, R. M. Martins]
{Mayara D. A. Caldas$^{1}$, Ricardo M. Martins$^{2}$}
	
\address{$^{1}$ Universidade Federal do Rio de Janeiro, Campus Duque de Caxias Professor Geraldo Cidade, Duque de Caxias, RJ, Brasil.
} 

\address{$^{2}$ Instituto de Matemática, Estatística e Computação Científica, Universidade Estadual de Campinas, Rua S\'{e}rgio Buarque de Holanda, 651, Cidade Universit\'{a}ria Zeferino Vaz, 13083--859, Campinas, SP, Brazil.}

\email{$^{1}$ mcaldas@ime.unicamp.br}
\email{$^{2}$ rmiranda@unicamp.br}
	
\subjclass[2010]{34A36,37C29,37H20,34C28}
	
\keywords{inelastic vector field; topological equivalence; piecewise smooth differential equations}
	
\maketitle

\begin{abstract}
In this article, we consider piecewise smooth differential equations $Z_{X_-X_+}$, where $X_-$ and $X_+$ are linear vector fields in dimension 3, having the torus as discontinuity manifold. We consider that $Z_{X_-X_+}$ is an inelastic vector field over the torus. We classify the set of tangency points on the torus for certain conditions and describe the behavior of the trajectories of the sliding vector field over the torus. We prove that, under generic conditions, all the trajectories of a piecewise smooth linear inelastic vector field over the torus are closed. We also provide a result about the topological equivalence of inelastic vector fields over the torus.
\end{abstract}

\section{Introduction and statement of main results}

One of the main goals in the study of discontinuous dynamical systems is to understand the qualitative behavior of their solutions. The mathematical formalization of these systems was initiated by Filippov (see \cite{f1}), and important results were obtained by Teixeira, Sotomayor, and others in the 1980s and 1990s (see \cite{marco2,marco4}). Furthermore, the theory of generic bifurcations was well structured by Teixeira, Guardia, and Seara (see \cite{GST,survey}). A fundamental part of these studies relies on previous work concerning vector fields defined on manifolds with boundary (see \cite{v1,v2}).

These systems naturally arise in many applications, such as control theory, mechanical systems with impacts, and electronic circuits, where discontinuities result from switching rules or abrupt changes in dynamics (see, for instance, \cite{ap1,ap2,ap3,ap4}).

A fundamental concept in this context is structural stability (see \cite{GST}), which establishes conditions under which the qualitative behavior of a system remains unchanged under small perturbations. Structural stability provides a framework for classifying systems and understanding which features of their dynamics are robust and which are sensitive to changes in parameters or external influences.

In recent years, particular attention has been devoted to the class of inelastic discontinuous dynamical systems, characterized by a discontinuity manifold that consists only of sliding and escaping regions. These systems exhibit complex dynamics due to the interaction between vector fields on either side of the discontinuity and the resulting sliding motion. The absence of crossing regions introduces peculiar phenomena that differ substantially from those observed in classical Filippov systems. For a detailed discussion of these aspects, see \cite{marco3}, \cite{Bro}, \cite{VO}, and \cite{villanueva2022global}. Understanding the stability and bifurcations of such systems remains an active area of research, with important implications for modeling real-world phenomena (see, for instance, \cite{ap5}).

Another central topic in the qualitative theory of dynamical systems is the study of invariant elements, particularly invariant tori, which are fundamental objects in Hamiltonian dynamics and in systems exhibiting quasi-periodic motions. Piecewise-smooth systems with invariant tori have been studied in \cite{r1,r2,t1}, addressing both theoretical aspects and applications.

In this work, we address the question of piecewise-smooth systems that admit an invariant torus composed solely of sliding motion and fold points. In the next subsection, we describe the main results obtained.

\subsection{Main results} Let $U\subset\mathbb R^3$ be an open bounded connected set with $(0,0,0)\in U$. Consider $T^2$ a torus embedded in $U$; to fix coordinates, consider the canonical torus in $\mathbb R^3$ with major radius $2$ and minor radius $1$ such that $T^2\subset U$. In other words, consider $h:U\subset\mathbb R^3\rightarrow\mathbb R$ given by $h(x,y,z)=(x^2+y^2+z^2+3)^2-16(x^2+y^2)$, so that $T^2=h^{-1}(\{0\})$. We call $h^{-1}((-\infty,0))$ the interior of the torus and $h^{-1}((0,\infty))$ the exterior of the torus. Note that the interior of the torus is a compact region.

Let $C^k(U, \mathbb{R}^3)$ be the set of vector fields of class $C^r$ defined in $U$. Given $X_-,X_+\in C^k(U, \mathbb{R}^3)$, consider the piecewise smooth vector field
\[Z_{X_-X_+}(x,y,y)=\left\{
\begin{array}{lcl}
	X_-(x,y,z),& h(x,y,z)< 0,\\
	X_+(x,y,z),& h(x,y,z)> 0,
\end{array}
\right.\]
where $X_-$ is defined in the region interior to $T^2$ and $X_+$ in the region exterior to $T^2$. We denote by $\mathfrak{Z}^r$  the set of vector fields of type $Z_{X_-X_+}$, which can be taken as $\mathfrak{Z}^r=C^k(U, \mathbb{R}^3)\times C^k(U, \mathbb{R}^3)$ inheriting a topology from this product.

We say that the piecewise smooth vector field $Z_{X_-X_+}$ is inelastic over $T^2$ if 
\begin{equation}\label{inelastic}
	X_+h(x,y,z)=-X_-h(x,y,z)
\end{equation} 
for all $(x,y,z)\in T^2$, thus $\left(X_+h+X_-h)\right|_{T^2}=0$. We are going to denote by $\mathfrak{Z}^{l}$ the set of piecewise smooth differential equations $Z_{X_-X_+}$ where $X_+$ and $X_-$ are linear vector fields with the origin being the singularity, and $\mathfrak{Z}^{l}_{\mathcal{I}}$ a subset of $\mathfrak{Z}^{l}$ where $Z_{X_-X_+}$ is inelastic over $T^2$. 

Our main results are the following.

\begin{theorem}\label{propfinal1}Under generic conditions, if $Z_{X_-X_+}\in\mathfrak{Z}^{l}_{\mathcal{I}}$, then the trajectories of $Z_{X_-X_+}$ over the torus are closed curves. 
\end{theorem}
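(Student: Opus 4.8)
The strategy is to compute the Filippov sliding vector field of $Z_{X_-X_+}$ on $T^2$ explicitly and to show that the inelastic hypothesis forces it to be, up to a nonzero scalar, the infinitesimal rotation about the axis of revolution of $T^2$; its orbits on $T^2$ are then visibly the parallels of the torus, which are closed. To begin, since $Z_{X_-X_+}$ is inelastic the torus has no crossing region: $T^2$ is the union of a sliding region and an escaping region, separated by the tangency curve $\mathcal T=\{p\in T^2:X_-h(p)=0\}$, which by \eqref{inelastic} equals $\{p\in T^2:X_+h(p)=0\}$; the trajectories of $Z_{X_-X_+}$ lying on $T^2$ are the integral curves of the sliding field $Z^s$. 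Substituting $X_+h=-X_-h$ into the Filippov expression $Z^s=(X_-h\,X_+-X_+h\,X_-)/(X_-h-X_+h)$, valid where $X_-h\neq0$, collapses it to $Z^s=\frac12(X_-+X_+)$ on $T^2$. Since $(X_-+X_+)h=X_-h+X_+h\equiv0$ on $T^2$, the field $\frac12(X_-+X_+)$ is tangent to $T^2$ and restricts there to a smooth vector field; combined with the classification of tangency points from the earlier sections, which shows $\mathcal T$ is a curve of regular folds, this gives that $Z^s$ extends smoothly across $\mathcal T$ and that the sliding trajectories on $T^2$ are exactly the integral curves of $\frac12(X_-+X_+)\big|_{T^2}$.

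Next I would identify $\frac12(X_-+X_+)$. Writing $X_\pm(p)=A_\pm p$, the inelastic condition \eqref{inelastic} says precisely that the polynomial $p\mapsto\langle\nabla h(p),(A_-+A_+)p\rangle$ vanishes on $T^2$. As $h$ is irreducible over $\R$, hence over $\C$, and cuts out a smooth connected surface, the real torus is Zariski dense in the complex hypersurface $\{h=0\}\subset\C^3$, so any polynomial vanishing on $T^2$ is a multiple of $h$; since $\langle\nabla h,(A_-+A_+)p\rangle$ has degree at most $4=\deg h$ and vanishes at the origin whereas $h(0)=9\neq0$, it must vanish identically. Using
\[
\nabla h(x,y,z)=4\left(x(x^2+y^2+z^2-5),\ y(x^2+y^2+z^2-5),\ z(x^2+y^2+z^2+3)\right),
\]
this polynomial has only a degree-$4$ part, namely $4(x^2+y^2+z^2)\,p^{\mathsf T}(A_-+A_+)p$, and a degree-$2$ part. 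Vanishing of the quartic part forces the symmetric part of $A_-+A_+$ to be zero, i.e.\ $A_-+A_+$ antisymmetric; feeding this back, the quadratic part collapses to a linear combination of $xz$ and $yz$ whose vanishing annihilates the remaining off-diagonal entries. Hence $A_-+A_+=\omega J$, where $J$ has $J_{12}=1=-J_{21}$ and all other entries zero, so $\frac12(X_-+X_+)(x,y,z)=\frac{\omega}{2}(y,-x,0)$ is the infinitesimal rotation about the $z$-axis — the axis of revolution of $T^2$.

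Finally, the generic condition at play is $\omega\neq0$; when $\omega=0$ the sliding field vanishes identically and every point of $T^2$ is an equilibrium. Assuming $\omega\neq0$: because $T^2$ does not meet the $z$-axis, as $h(0,0,z)=(z^2+3)^2>0$, the field $\frac12(X_-+X_+)$ is nowhere zero on $T^2$, and its integral curves are the orbits of the rotation flow about the $z$-axis, so they lie in the horizontal slices $\{z=c\}\cap T^2$. Inspecting $h(x,y,c)=0$, which reduces to $\sqrt{x^2+y^2}=2\pm\sqrt{1-c^2}$, shows that each such slice consists of one or two round circles (two for $|c|<1$, one for $c=\pm1$, empty otherwise); since the field is nonvanishing, each of its orbits on $T^2$ is exactly one of these circles, hence a closed curve, which proves the theorem. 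The two steps that I expect to require genuine care are the upgrade from ``$\langle\nabla h,(A_-+A_+)p\rangle$ vanishes on $T^2$'' to the polynomial identity — which rests on the real-irreducibility of $h$ and the density of its real locus in $\{h=0\}$ — and the reduction of the on-torus dynamics to the smooth field $\frac12(X_-+X_+)\big|_{T^2}$ at the fold curve $\mathcal T$, where the earlier tangency-point classification is used; the determination of $A_-+A_+$ and the closedness of the orbits are then routine.
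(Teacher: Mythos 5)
Your proof is correct and follows essentially the same route as the paper: both reduce the on-torus dynamics to the sliding field $Z^s=\tfrac12(X_-+X_+)$, show that the inelastic condition forces $A_-+A_+$ to be an infinitesimal rotation about the $z$-axis (the paper does this via Lemma \ref{lema1} and Lemma \ref{lema-svt}, with your $\omega$ being the paper's $-(a_{2,1}+b_{2,1})$), and conclude under the generic condition $\omega\neq 0$ that the orbits are the horizontal circles of $T^2$. Your write-up is in fact slightly more careful at two points the paper leaves implicit, namely the upgrade from ``vanishes on $T^2$'' to a polynomial identity (via irreducibility of $h$ and Zariski density) and the use of the nonvanishing of $Z^s$ on $T^2$ to conclude that each orbit is a full circle rather than merely contained in one.
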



\begin{theorem}\label{44r4r4r4}There is a set $\mathfrak{z}\subset\mathfrak{Z}^{l}_{\mathcal{I}}$ such that if $Z_{X_-X_+}\in \mathfrak{z}$ and $\tilde Z_{\tilde{X}_-\tilde{X}_+}\in \mathfrak{z}$ is a vector field in a small neighborhood of $Z_{X_-X_+}\in \mathfrak{z}$, then the sliding vector fields $Z^{s}$ and $\tilde Z^{s}$, associated to $Z_{X_-X_+}$ and $\tilde Z_{\tilde{X}_-\tilde{X}_+}$, respectively, and defined in the torus, are topologically equivalent.
\end{theorem}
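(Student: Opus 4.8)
The plan is to build on Theorem~\ref{propfinal1} and the classification of tangency points already established. Since for $Z_{X_-X_+}\in\mathfrak{Z}^l_{\mathcal I}$ the torus $T^2$ consists entirely of sliding (and escaping) points together with finitely many fold curves, the sliding vector field $Z^s$ is a well-defined smooth vector field on $T^2$ minus the fold set, and by Theorem~\ref{propfinal1} all its trajectories are closed. The first step is to define the open set $\mathfrak z\subset\mathfrak Z^l_{\mathcal I}$ by imposing the generic conditions of Theorem~\ref{propfinal1} together with a nondegeneracy requirement on the fold curves (each fold curve is a regular closed curve, folds are visible/invisible in a stable way, and the finitely many fold curves are pairwise disjoint and meet the trajectory foliation transversally except at isolated tangencies). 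These are open conditions in the product topology on $\mathfrak Z^l=C^k(U,\mathbb R^3)\times C^k(U,\mathbb R^3)$, so $\mathfrak z$ is open, and one checks it is nonempty by exhibiting an explicit example.

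Next I would extract from Theorem~\ref{propfinal1} the global structure of the phase portrait of $Z^s$ on $T^2$: since every orbit is closed, the sliding flow on $T^2$ is, away from the fold curves and a finite set of distinguished periodic orbits/centers, organized into annular bands foliated by periodic orbits, with the fold curves acting as boundaries where trajectories either are tangent or where the sliding field becomes singular. The key observation is that this decomposition of $T^2$ into finitely many invariant cells (each an annulus of periodic orbits, a center, or a fold curve) is a \emph{combinatorial} datum: the number of fold curves, their cyclic arrangement on $T^2$, and the adjacency pattern of the annular bands. I would show that on $\mathfrak z$ this combinatorial datum is locally constant — a small perturbation $\tilde Z_{\tilde X_-\tilde X_+}$ in $\mathfrak z$ cannot create or destroy fold curves (these are cut out by $X_-h=0$ on $T^2$, a transversality condition), cannot change their isotopy type on $T^2$, and cannot change which bands are adjacent.

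The construction of the equivalence homeomorphism $\phi:T^2\to T^2$ with $\phi(Z^s\text{-orbits})=\tilde Z^s\text{-orbits}$ then proceeds cell by cell. On each fold curve, $\phi$ is a diffeomorphism of circles carrying the fold set of $Z^s$ to that of $\tilde Z^s$; on each annular band of periodic orbits one uses the standard fact that two foliations of an annulus by circles parallel to the boundary are topologically equivalent via a homeomorphism with prescribed boundary values (a Moser-type / fibered-homeomorphism argument, realizing $\phi$ as identity on one transverse section and sliding along orbits). One then glues these along the fold curves, checking the boundary identifications are compatible; the closedness of all orbits is exactly what makes the gluing possible without monodromy obstructions.

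The main obstacle I expect is the behavior near the fold curves: there the sliding vector field $Z^s$ typically blows up (the denominator $X_+h-X_-h$, or rather the Filippov convex-combination denominator, vanishes), so $Z^s$ is not a smooth vector field up to the boundary of each band, and one must verify that the qualitative picture — orbits spiraling into, or turning tangentially along, the fold curve — is preserved under perturbation and can be matched by $\phi$. Handling this requires a careful local normal form for $Z^s$ near a generic fold curve (the classification of tangency points proved earlier in the paper should supply this), after which the local equivalence near folds is routine and can be patched into the global $\phi$ constructed on the bands. A secondary technical point is ensuring $\phi$ can be taken continuous across the gluing circles while respecting the (possibly different) positions of the tangency points of trajectories with the fold curves; this is arranged by first fixing $\phi$ on the fold curves to match these distinguished points and then extending inward.
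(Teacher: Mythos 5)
Your plan treats this as a general Peixoto--type structural stability problem on the torus, but it misses the specific fact that makes the theorem almost immediate, and the step you yourself flag as the main obstacle is never actually carried out. By Lemma \ref{lema-svt}, for every $Z_{X_-X_+}\in\mathfrak{Z}^{l}_{\mathcal{I}}$ the Filippov denominator $X_-h-X_+h$ equals $2X_-h$ and the convex combination collapses to $Z^s=\tfrac12(X_++X_-)$, which is the explicit linear field $-\tfrac12(a_{2,1}+b_{2,1})\,y\,\partial_x+\tfrac12(a_{2,1}+b_{2,1})\,x\,\partial_y$. Two consequences follow. First, $Z^s$ does \emph{not} blow up at the fold curves: it is the restriction to $T^2$ of a globally smooth (indeed linear) vector field, so the ``main obstacle'' you describe (a singular sliding field near the tangency set, requiring a local normal form) does not exist in this setting; and since you defer precisely that normal form to an unspecified later step, your construction of $\phi$ near the fold set remains a gap even on its own terms. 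Second, and decisively, the orbits of $Z^s$ are the horizontal circles $T^2\cap\{z=\mathrm{const}\}$ for \emph{every} member of the class with $a_{2,1}+b_{2,1}\neq 0$; the coefficient $a_{2,1}+b_{2,1}$ only rescales time and fixes an orientation, and neither its vanishing nor its sign can change under a small perturbation. Hence $Z^s$ and $\tilde Z^s$ have literally the same orbit foliation and the identity map of $T^2$ already realizes the topological equivalence; no band-by-band Moser-type gluing is needed.

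For comparison, the paper's proof defines $\mathfrak z$ by the hypotheses of Lemmas \ref{tang1} to \ref{tang6} (together with hyperbolicity of the singularities of $X_\pm$), which guarantee that the tangency set is a finite union of ellipses, each meeting the horizontal circles in finitely many points or coinciding with one of them, and then observes that the sliding trajectories of any two nearby fields in $\mathfrak z$ are the same horizontal circles, so the equivalence is given directly by the intersection of $T^2$ with horizontal planes. Your decomposition into annuli of periodic orbits is not wrong as a picture, but the assertions that the combinatorial datum is locally constant and that the gluings across fold curves are compatible are exactly the points you leave unproved; without invoking the explicit form of $Z^s$ from Lemma \ref{lema-svt}, those are where the argument would have to do real work.
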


This paper is divided as follows. Section \ref{fili} presents the basic definitions about piecewise smooth vector fields according to Filippov’s convention \cite{GST}. In Section \ref{secinelastic} we prove Theorem \ref{propfinal1}. In Section \ref{sectang} we classify of the tangency sets on the torus. Section \ref{proof2} is dedicated to proof of Theorem \ref{44r4r4r4}. In Section 6 we present some conclusions.

\section{Filippov convention for piecewise smooth differential equations}
\label{fili}
Let $X_+$ and $X_-$ be smooth vector fields defined in an open and convex subset $U\subset \mathbb{R}^3$ and, without loss of generality, assume that the origin belongs to $U$. Consider $f:U\rightarrow\mathbb{R}$ a function $\mathcal{C}^r$, with $r>1$ ($\mathcal{C}^r$ denotes the set of continuously differentiable functions of order $r$), for which $0$ is a regular value. Thus, the curve  $\Sigma=f^{-1}(0)\cap U$ is a submanifold of dimension 1 and divides the open set $U$ into two open sets,
$$\Sigma^{+}=\{p\in U | f(p)>0\} \quad \mbox{and} \quad \Sigma^{-}=\{p\in U | f(p)<0\}.$$

A Filippov system is a piecewise smooth vector field defined in the following form: 
\begin{equation}\label{sist.fil}
	Z_{X_-X_+}(p)=\begin{cases}
        X_-(p), & p \in \Sigma^{-},\\
		X_+(p),& p \in \Sigma^{+}, 
	\end{cases}
\end{equation}
in order to identify the components of the field. Moreover, we assume that $X_+$ and $X_-$ are fields of class $\mathcal{C}^k$ with $k>1$ in $\overline{\Sigma^{+}}$ and $\overline{\Sigma^{-}}$, respectively, where $\overline{\Sigma^{\pm}}$ denotes the closure of $\Sigma^{\pm}$.    

We denote by $\Omega^k(U)$ the space of vector fields of the type (\ref{sist.fil}), that can be taken as $\Omega^k(U)=C^k(U, \mathbb{R}^3)\times C^k(U, \mathbb{R}^3)$, where, by an abuse of notation, we denote by $C^k(U, \mathbb{R}^3)$ both the set of vector fields of class $\mathcal{C}^k$ defined in $\overline{\Sigma^{+}}$ and $\overline{\Sigma^{-}}$. We consider $\Omega^k(U)$ endowed with the product topology.

To establish the dynamic given by a Filippov vector field $Z_{X_-X_+}$ in $U$, we need to define the local trajectory through a point $p\in U$, that is, we must define the flow $\varphi_z(t,p)$ of (\ref{sist.fil}). If $p \in \Sigma_{}^{\pm}$, then the trajectory through $p$ is given by the fields $X_+$ and $X_-$ in the usual way. However, if $p \in \Sigma$, we must be more careful defining the trajectory. In order to extend the definition of trajectory for $\Sigma$, we are going to divide the discontinuity submanifold $\Sigma$ in the closure of three disjoint regions: 
\begin{itemize}
	\item Crossing region: $ \Sigma^{c}=\{p \in \Sigma |  X_+f(p)\cdot X_-f(p) >0\}$,
	\item Sliding region: $ \Sigma^{s}=\{p \in \Sigma |  X_+f(p) <0,  X_-f(p) >0\}$,
	\item Escape region: $ \Sigma^{e}=\{p \in \Sigma |  X_+f(p) >0,  X_-f(p) <0\}$,
\end{itemize}
where $X_+f(p)=\langle X_+(p), \nabla f(p)\rangle$ and $X_-f(p)=\langle X_-(p), \nabla f(p)\rangle$ are Lie's derivative of $f$ with respect to the field $X_+$ in $p$ and $f$ with respect to the field $X_-$ in $p$, respectively. These three regions are open subsets of $\Sigma$ in the induced topology and can have more than one convex component. 

We can observe that when defining the regions above we aren't including the {tangent points}, that is, the points $p \in \Sigma$ for which $X_+f(p)=0$ or $X_-f(p)=0$. These points are in the boundaries of the regions $\Sigma^{c}$, $\Sigma^{s}$ and $\Sigma^{e}$, which are going to be denoted by $\partial \Sigma^{c}$, $\partial \Sigma^{s}$ and $\partial\Sigma{}^{e}$, respectively.

Note that if $X_+(p)=0$, then $X_+f(p)=0$, so the critical points of $X_+$ in $\Sigma$ are also included in the tangent points. Now, if $X_+(p)\neq 0$ and $X_+f(p)=0$, we confirm that the trajectory of $X_+$ passing through $p$ is, indeed, tangent to $\Sigma$.

We can distinguish the tangency types between a smooth field and a manifold depending on how the contact between the trajectory of the field and the manifold occurs. Next, we define two types of tangency.

\begin{definition}
	A smooth vector field $X_+$ admits a fold or quadratic tangency with $\Sigma$ in a point $p\in \Sigma$ if $X_+f(p)=0$ and $X_+^2f(p)\neq 0$, being $X_+^2f(p)=\langle X_+(p), \nabla X_+f(p)\rangle$.
\end{definition}

\begin{definition}
	A smooth vector field $X_+$ admits a cusp or cubic tangency with $\Sigma$ in a point $p\in \Sigma$ if $X_+f(p)=X_+^2f(p)=0$ and $X_+^3f(p)\neq 0$, being $X_+^3f(p)=\langle X_+(p), \nabla X_+^2f(p)\rangle$.
\end{definition}

Let's define the trajectory passing through a point $p$ in $\Sigma^{c}$, $\Sigma^{s}$ and $\Sigma^{e}$. If $p \in \Sigma^{c}$, both the vector fields $X_+$ and $X_-$ point to $\Sigma^{+}$ or $\Sigma^{-}$ and, therefore, it is sufficient to concatenate the trajectories of $X_+$ and $X_-$ that pass through $p$. If $p\in \Sigma^{s}\cup \Sigma^{e}$, we have that the vector fields point to opposite directions, thus, we can't concatenate the trajectories. In this way, the local orbit is provided by the Filippov convention. Thus, we define the { sliding vector field}
\begin{equation}\label{campo_deslizante}
	Z^{s}(p)=\frac{1}{X_-f(p)-X_+f(p)}(X_-f(p)X_+(p)-X_+f(p)X_-(p)).
\end{equation}  

Note that $Z^s$ represents the convex linear combination of $X_+(p)$ and $X_-(p)$, so that $Z^{s}$ is tangent to $\Sigma$, moreover, its trajectories are contained in $\Sigma^{s}$ or $\Sigma^{e}$. Thus, the trajectory through $p$ is the trajectory defined by the sliding vector field in (\ref{campo_deslizante}).

We present now the definition of topological equivalence and structural stability for Filippov systems. We indicate the reference \cite{marco2} 
for more details.

\begin{definition}
	Consider $Z_{X_-,X_+}, \tilde{Z}_{\tilde{X}_-,\tilde{X}_+} \in \Omega^k(U)$ a Filippov system and $p\in U$. We say that $Z_{X_-,X_+}$ and $\tilde{Z}_{\tilde{X}_-,\tilde{X}_+}$ are topologically equivalent at $p$ if there exists neighborhoods $V$ and $\tilde{V}$ of $p$ in $\mathbb{R}^n$ and a homeomorphism $\phi:V \rightarrow \tilde{V}$ which keeps $\Sigma$ invariant and takes trajectories of $Z_{X_-,X_+}$ into trajectories of $\tilde{Z}_{\tilde{X}_-,\tilde{X}_+}$ preserving time orientation. 
\end{definition}

From this definition we can define the concept of local structural stability.

\begin{definition}
	We say that a Filippov system $Z_{X_-,X_+}\in \Omega^k(U)$ is structurally stable at $ p\in U$ if there exists an open neighborhood $W$ of $Z_{X_-,X_+}$ in $\Omega^k(U)$ such that, if $\tilde{Z}_{\tilde{X}_-,\tilde{X}_+}\in W$ is a Filippov system, then $\tilde{Z}_{\tilde{X}_-,\tilde{X}_+}$ and $Z_{X_-,X_+}$ are topologically equivalent at $p$. 
\end{definition}

Both definitions can be given globally, as follows.

\begin{definition}
	Consider $Z_{X_-,X_+}, \tilde{Z}_{\tilde{X}_-,\tilde{X}_+} \in \Omega^k(U)$ Filippov systems. We say that they are topologically equivalent if there exists a homeomorphism $\phi:U \rightarrow \tilde{U}$ that keeps $\Sigma$ invariant and takes orbits of $Z_{X_-,X_+}$ into orbits of $\tilde{Z}_{\tilde{X}_-,\tilde{X}_+}$ preserving time orientation. 
\end{definition}

\begin{definition}
	We say that a Filippov system $Z_{X_-,X_+}\in \Omega^k(U)$ is structurally stable if there exists an open neighborhood $W$ of $Z_{X_-,X_+}$ in $\Omega^k(U)$ such that, if $\tilde{Z}_{\tilde{X}_-,\tilde{X}_+}\in W$ is a Filippov system, then $\tilde{Z}_{\tilde{X}_-,\tilde{X}_+}$ and $Z_{X_-,X_+}$ are topologically equivalent. 
\end{definition}

\section{Review and classification of inelastic systems}\label{secinelastic}

In this section, we present some properties of the piecewise smooth vector field $Z_{X_-X_+}$, when it is inelastic over $T^2$ and $X_-, X_+$ are linear vector fields. Moreover, we prove Theorem \ref{propfinal1}.

\begin{lemma}\label{lema1}Let $X_+$ and $X_-$ be linear vector fields in $\mathbb{R}^3$. Consider 
	$$X_+(x,y,z)=A(x,y,z)^{T}\quad \mbox{and}\quad X_-(x,y,z)=B(x,y,z)^{T},$$ for $A=(a_{i,j})$ and $B=(b_{i,j})$ matrices. If $Z_{X_-X_+}$ is inelastic over the torus $T^2$, then
	\[
	B= \left( \begin {array}{ccc} -a_{{1,1}}&-a_{{1,2}}-a_{{2,1}}-b_{{2,1}}&
	-a_{{1,3}}\\ \noalign{\medskip}b_{{2,1}}&-a_{{2,2}}&-a_{{2,3}}
	\\ \noalign{\medskip}-a_{{3,1}}&-a_{{3,2}}&-a_{{3,3}}\end {array}
	\right).
	\]	
\end{lemma}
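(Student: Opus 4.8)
The plan is to compute the inelasticity condition \eqref{inelastic} explicitly and read off the constraints on the entries of $B$. First I would compute the Lie derivatives $X_+h$ and $X_-h$ at a point $(x,y,z)$. Writing $h(x,y,z)=(x^2+y^2+z^2+3)^2-16(x^2+y^2)$, its gradient is $\nabla h = \bigl(4x(x^2+y^2+z^2+3)-32x,\ 4y(x^2+y^2+z^2+3)-32y,\ 4z(x^2+y^2+z^2+3)\bigr)$. Then $X_+h = \langle A(x,y,z)^T,\nabla h\rangle$ and $X_-h = \langle B(x,y,z)^T,\nabla h\rangle$, each a polynomial in $x,y,z$. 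The inelasticity condition requires $(X_+h + X_-h)(x,y,z)=0$ for every $(x,y,z)\in T^2$, i.e. whenever $h(x,y,z)=0$.

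Next I would exploit the fact that $h$ is irreducible (or at least that $T^2$ is Zariski-dense in its real zero set and not contained in any lower-degree surface), so that a polynomial vanishing on all of $T^2$ must be divisible by $h$. By comparing degrees, $X_+h + X_-h$ is a polynomial of degree $3$ in $(x,y,z)$, while $h$ has degree $4$; hence the only multiple of $h$ of degree $\le 3$ is the zero polynomial. Therefore the inelasticity condition is equivalent to $X_+h + X_-h \equiv 0$ as a polynomial identity in $\R^3$. Expanding $\langle (A+B)(x,y,z)^T,\nabla h\rangle$ and collecting terms by monomial, I would set each coefficient to zero; the cubic-degree part forces $a_{i,j}+b_{i,j}=0$ on the diagonal and on the $(1,3),(2,3),(3,1),(3,2)$ positions (these come from the $(x^2+y^2+z^2)(\text{linear})$ piece of $\nabla h$ paired against the vector field), while the lower-order terms, coming from the $-32x,-32y$ correction in the first two components of $\nabla h$, produce one additional relation mixing $a_{1,2},a_{2,1},b_{1,2},b_{2,1}$. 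Solving this last relation for $b_{1,2}$ yields $b_{1,2}=-a_{1,2}-a_{2,1}-b_{2,1}$, leaving $b_{2,1}$ as the single free parameter; assembling these gives exactly the stated matrix $B$.

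The main obstacle I anticipate is purely bookkeeping: correctly expanding the degree-$3$ polynomial $\langle(A+B)(x,y,z)^T,\nabla h\rangle$, grouping its $10$ monomials, and verifying that the resulting linear system in the entries of $A+B$ has precisely the claimed solution set — in particular checking that the off-diagonal $(1,2)$ and $(2,1)$ entries behave differently from the rest because of the $-16(x^2+y^2)$ term, which is what breaks the naive guess $B=-A$. A clean way to organize this is to split $\nabla h = 4(x^2+y^2+z^2+3)(x,y,z) - 32(x,y,0)$ and handle the two pieces separately: the first piece contributes $4(x^2+y^2+z^2+3)\,\langle(A+B)(x,y,z)^T,(x,y,z)\rangle$, whose vanishing on $T^2$ (hence identically, by the degree argument applied to the quadratic form) forces $A+B$ to be skew-symmetric up to the diagonal; the second piece then pins down the remaining entry. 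No genericity hypothesis is needed here — the conclusion is an exact identity — so the lemma follows directly from this coefficient comparison.
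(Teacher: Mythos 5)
Your overall strategy is the same as the paper's: expand $X_+h$ and $X_-h$ using $\nabla h = 4(x^2+y^2+z^2+3)(x,y,z)-32(x,y,0)$, impose $X_+h+X_-h=0$, and compare coefficients; the coefficient relations you extract (symmetric part of $A+B$ vanishing from the quartic piece, plus $b_{1,3}=-a_{1,3}$, $b_{2,3}=-a_{2,3}$ and the single mixed relation on the $(1,2),(2,1)$ entries from the $-32(x,y,0)$ correction) are exactly the ones in the paper and do assemble into the stated matrix $B$. You also try to justify a step the paper silently elides, namely why vanishing of $X_+h+X_-h$ \emph{on} $T^2$ upgrades to a polynomial identity on all of $\R^3$. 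That instinct is right, but your degree count is wrong: $X_+h+X_-h=\langle (A+B)(x,y,z)^T,\nabla h\rangle$ pairs a linear field with a cubic gradient, so it has degree $4$, not $3$ --- equal to $\deg h$. Hence ``the only multiple of $h$ of degree $\le 3$ is zero'' does not apply, and a priori one could have $X_+h+X_-h=c\,h$ with $c\neq 0$. The repair is one line: $X_+h+X_-h$ has no constant term (both vector fields vanish at the origin), whereas $h(0,0,0)=9$, so $c=0$; alternatively, note that $X_+h+X_-h$ only has homogeneous components of degrees $4$ and $2$ while $h$ has a nonzero constant term. With that fix (and granting irreducibility of $h$, which you should at least assert), your reduction to a coefficient comparison is sound.

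One smaller bookkeeping slip: in your middle paragraph you attribute the individual relations $a_{1,3}+b_{1,3}=0$ and $a_{2,3}+b_{2,3}=0$ to the cubic/quartic piece $4(x^2+y^2+z^2+3)\langle (A+B)v,v\rangle$. That piece only kills the \emph{symmetric} combinations $(A+B)_{ij}+(A+B)_{ji}$; the separation of the $(1,3)$ and $(2,3)$ entries individually comes from the degree-two part $-32\langle (A+B)v,(x,y,0)\rangle$, which contributes $xz$ and $yz$ monomials with coefficients $(A+B)_{1,3}$ and $(A+B)_{2,3}$ alone. Your final paragraph's two-piece organization gets this right, so this is an internal inconsistency rather than a fatal error, but as written the two descriptions of where each constraint comes from contradict each other and only one of them survives the actual computation.
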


\begin{proof}We have 
	\[X_+(x,y,z)= \left(\begin{array}{ccc}
		a_{1,1} & a_{1,2} & a_{1,3}\\ \noalign{\medskip}
		a_{2,1} & a_{2,2} & a_{2,3}\\ \noalign{\medskip}
		a_{3,1} & a_{3,2} & a_{3,3} 
	\end{array}\right) 
	\left(\begin{array}{c}
		x\\ \noalign{\medskip}
		y\\ \noalign{\medskip}
		z
	\end{array}\right)=
	\left(\begin{array}{c}
		a_{1,1}x+a_{1,2}y+a_{1,3}z\\ \noalign{\medskip}
		a_{2,1}x+a_{2,2}y+a_{2,3}z\\ \noalign{\medskip}
		a_{3,1}x+a_{3,2}y+a_{3,3}z
	\end{array}\right) \]
	and
	\[X_-(x,y,z)= \left(\begin{array}{ccc}
		b_{1,1} & b_{1,2} & b_{1,3}\\ \noalign{\medskip}
		b_{2,1} & b_{2,2} & b_{2,3}\\ \noalign{\medskip}
		b_{3,1} & b_{3,2} & b_{3,3} 
	\end{array}\right) 
	\left(\begin{array}{c}
		x\\ \noalign{\medskip}
		y\\ \noalign{\medskip}
		z
	\end{array}\right)=
	\left(\begin{array}{c}
		b_{1,1}x+b_{1,2}y+b_{1,3}z\\ \noalign{\medskip}
		b_{2,1}x+b_{2,2}y+b_{2,3}z\\ \noalign{\medskip}
		b_{3,1}x+b_{3,2}y+b_{3,3}z
	\end{array}\right). \]
	
	Considering $h(x,y,z)=(x^2+y^2+z^2+3)^2-16(x^2+y^2)$, we have
	$$\nabla h(x,y,z) =(4x(x^2+y^2+z^2+3)-32x,4y(x^2+y^2+z^2+3)-32y,4z(x^2+y^2+z^2+3)).$$
	Thus, 
	\begin{align*}
		X_+h(x,y,z)&=
		4(x^2+y^2+z^2+3)[a_{1,1}x^2+a_{2,2}y^2+a_{3,3}z^2+xy(a_{1,2}+a_{2,1})\\ \noalign{\medskip}
		&\quad+xz(a_{1,3}+a_{3,1})+yz(a_{2,3}+a_{3,2})]-32[a_{1,1}x^2+a_{2,2}y^2\\ \noalign{\medskip}
		&\quad+xy(a_{1,2}+a_{2,1})+a_{1,3}xz+a_{2,3}yz]
	\end{align*}
	and
	\begin{align*}
		X_-h(x,y,z)
		&=4(x^2+y^2+z^2+3)[b_{1,1}x^2+b_{2,2}y^2+b_{3,3}z^2+xy(b_{1,2}+b_{2,1})\\ \noalign{\medskip}
		&\quad+xz(b_{1,3}+b_{3,1})+yz(b_{2,3}+b_{3,2})]-32[b_{1,1}x^2+b_{2,2}y^2\\ \noalign{\medskip}
		&\quad +xy(b_{1,2}+b_{2,1})+b_{1,3}xz+b_{2,3}yz].
	\end{align*}
	
	By hypothesis, $X_+$ and $X_-$ are inelastic, thus by equation (\ref{inelastic}), we have 
	$$X_+h(x,y,z)=-X_-h(x,y,z),$$ 
	that is,
	\begin{align*}
		&\quad4(x^2+y^2+z^2+3)[a_{1,1}x^2+a_{2,2}y^2+a_{3,3}z^2+xy(a_{1,2}+a_{2,1})+xz(a_{1,3}+a_{3,1})\\ \noalign{\medskip}
		&\quad+yz(a_{2,3}+a_{3,2})]-32[a_{1,1}x^2+a_{2,2}y^2+xy(a_{1,2}+a_{2,1})+a_{1,3}xz+a_{2,3}yz]\\ \noalign{\medskip}
		&=-4(x^2+y^2+z^2+3)[b_{1,1}x^2+b_{2,2}y^2+b_{3,3}z^2+xy(b_{1,2}+b_{2,1})+xz(b_{1,3}+b_{3,1})\\ \noalign{\medskip}
		&\quad+yz(b_{2,3}+b_{3,2})]+32[b_{1,1}x^2+b_{2,2}y^2+xy(b_{1,2}+b_{2,1})+b_{1,3}xz+b_{2,3}yz]. 
	\end{align*}
	In this way, we obtain
	$$b_{1,1}=-a_{1,1}, \quad b_{2.2}=-a_{2,2}\quad b_{3,3}=-a_{3,3} \quad b_{1,2}+b_{2,1}=-a_{1,2}-a_{2,1},$$
	$$b_{1,3}+b_{3,1}=-a_{1,3}-a_{3,1} \quad b_{2,3}+b_{3,2}=-a_{2,3}-a_{3,2}, \quad b_{1,3}=-a_{1,3} \quad b_{2,3}=-a_{2,3}.$$
	Therefore,
	\[
	B= \left( \begin {array}{ccc} -a_{{1,1}}&-a_{{1,2}}-a_{{2,1}}-b_{{2,1}}&
	-a_{{1,3}}\\ \noalign{\medskip}b_{{2,1}}&-a_{{2,2}}&-a_{{2,3}}
	\\ \noalign{\medskip}-a_{{3,1}}&-a_{{3,2}}&-a_{{3,3}}\end {array}
	\right).
	\]
\end{proof}

\begin{lemma}\label{lema-svt} Consider $Z_{X_-X_+}\in \mathfrak{Z}^{l}_{\mathcal{I}}$. Then the torus is a sliding region or an escape region, and the sliding vector field $Z^s$ defined over the torus is given by
	\[
	\begin{array}{c}
		Z^s(x,y,z)=-\dfrac{1}{2}(a_{2,1}+b_{2,1})y\dfrac{\partial}{\partial x}+\dfrac12(a_{2,1}+b_{2,1})x\dfrac{\partial}{\partial y}.
	\end{array}
	\]
	Moreover, all other trajectories of $Z^s$ are closed.
\end{lemma}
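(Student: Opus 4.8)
The plan is to exploit the inelastic relation to collapse Filippov's sliding formula \eqref{campo_deslizante} to the arithmetic mean of $X_+$ and $X_-$, then read off the matrix entries from Lemma~\ref{lema1}, and finally recognize the resulting field as the infinitesimal generator of a rotation about the $z$-axis.

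First I would check that $T^2$ contains no crossing region: for any $p\in T^2$ the inelastic identity~\eqref{inelastic} gives $X_+h(p)\cdot X_-h(p)=-\bigl(X_-h(p)\bigr)^2\le 0$, and this is strictly negative precisely when $p$ is not a tangent point; hence every non-tangent point of $T^2$ lies in $\Sigma^s\cup\Sigma^e$, and by continuity of $\sgn\bigl(X_-h\bigr)$ each connected component of the complement of the tangency set is a sliding region or an escape region. On such a component I would simplify~\eqref{campo_deslizante}: substituting $X_+h=-X_-h$ turns the denominator into $X_-h-X_+h=2X_-h$ and the numerator into $X_-h\,X_+-X_+h\,X_-=X_-h\,(X_++X_-)$, so cancelling the nonzero factor $X_-h$ gives $Z^s=\tfrac12\bigl(X_++X_-\bigr)$. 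Since the right-hand side is a globally defined linear field, this is also the natural continuous extension of $Z^s$ across the tangency set.

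Next I would insert the form of $B$ from Lemma~\ref{lema1} and compute $A+B$; every entry cancels except the $(1,2)$- and $(2,1)$-entries, which equal $-(a_{2,1}+b_{2,1})$ and $a_{2,1}+b_{2,1}$ respectively. Multiplying $A+B$ by $(x,y,z)^T$ and dividing by $2$ yields exactly the stated expression $Z^s=-\tfrac12(a_{2,1}+b_{2,1})y\,\partial_x+\tfrac12(a_{2,1}+b_{2,1})x\,\partial_y$. For the closedness assertion, set $c=a_{2,1}+b_{2,1}$: the field $\tfrac{c}{2}(-y\,\partial_x+x\,\partial_y)$ is the infinitesimal generator of the rotation $\mathbb{S}^1$-action about the $z$-axis. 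Along any trajectory one has $\dot z=0$ and $\tfrac{d}{dt}(x^2+y^2)=2x\dot x+2y\dot y=0$, so the trajectory is confined to a horizontal circle $\{z=z_0,\ x^2+y^2=r_0^2\}$; since rotation about the $z$-axis preserves $h$ (indeed $Z^s h\equiv 0$ on all of $\mathbb{R}^3$, which also confirms that $Z^s$ is tangent to $T^2$) and $T^2$ misses the $z$-axis, these circles are precisely the parallels of $T^2$, each traversed periodically. Hence every trajectory of $Z^s$ over the torus is a closed curve (when $c=0$ the field vanishes identically and the orbits degenerate to points). The only delicate point is the behaviour of $Z^s$ on the tangency set, which is handled by the global-linearity observation above; the remainder is a short computation with the matrices of Lemma~\ref{lema1}.
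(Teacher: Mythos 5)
Your proposal is correct and follows essentially the same route as the paper: you reduce Filippov's formula to $Z^s=\tfrac12(X_++X_-)$ via the inelastic identity, compute $A+B$ from Lemma~\ref{lema1}, and conclude that the resulting rotation field about the $z$-axis has closed orbits on $T^2$. Your treatment is in fact slightly more careful than the paper's at the tangency set (where $X_+h\cdot X_-h$ vanishes rather than being strictly negative) and in the closedness argument (conserving both $z$ and $x^2+y^2$ rather than only $z$), but these are refinements of the same argument, not a different one.
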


\begin{proof} We have that $X_+$ and $X_-$ are inelastic, thus $X_+h(x,y,z)X_-h(x,y,z)<0$ for all $(x,y,z) \in T^2$, that is, the torus is a sliding region or an escape region. 
	
	We compute $Z^s$ using the equation (\ref{campo_deslizante}) and the information that $X_+$ and $X_-$ are inelastic, that is, 
	\begin{align*}
		Z^s(x,y,z)&\!=\!\dfrac{1}{X_-h(x,y,z)-X_+h(x,y,z)}\! \left(\!X_-h(x,y,z)X_+(x,y,z)\!-\! X_+h(x,y,z)X_-(x,y,z)\!\right)\\ \noalign{\medskip}
		&\!=\!\dfrac{1}{X_-h(x,y,z)+X_-h(x,y,z)}\! \left(\!X_-h(x,y,z)X_+(x,y,z)\!+\! X_-h(x,y,z)X_-(x,y,z)\!\right)\\ \noalign{\medskip}
		&\!=\!\dfrac{1}{2}(X_+(x,y,z)+X_-(x,y,z))\\\noalign{\medskip}
		&\!=\! \dfrac{1}{2} 
		\left( \begin {array}{ccc} 
		0 &-a_{2,1}-b_{2,1} & 0\\ \noalign{\medskip}
		a_{2,1}+b_{2,1}&0& 0\\\noalign{\medskip}
		0&0&0
		\end {array} \right) 
		\left(\begin{array}{c}
			x\\ \noalign{\medskip}
			y\\ \noalign{\medskip}
			z
		\end{array}\right)\\ \noalign{\medskip}
		&\!=\!-\dfrac{1}{2}(a_{2,1}+b_{2,1})y\dfrac{\partial}{\partial x}+\dfrac12(a_{2,1}+b_{2,1})x\dfrac{\partial}{\partial y}.
	\end{align*}
	
	Considering the vector $\eta=(0,0,1)$, we have that 
	$$\left\langle Z^s(x,y,z), \eta\right\rangle=\left\langle \left(-\dfrac{1}{2}(a_{2,1}+b_{2,1})y,\dfrac12(a_{2,1}+b_{2,1})x,0\right),(0,0,1)  \right\rangle=0. $$
	So the trajectories of the sliding vector field $Z^s$ are contained in the planes orthogonal to the vector $\eta$. Furthermore, as they are contained in $T^2$, we conclude that they are closed.
\end{proof}

In Figure \ref{torodeslize} we illustrate the sliding vector field $Z^s$ defined over the torus given by Lemma \ref{lema-svt}.

\begin{figure}[h]
	\centering
	\includegraphics[scale=0.45]{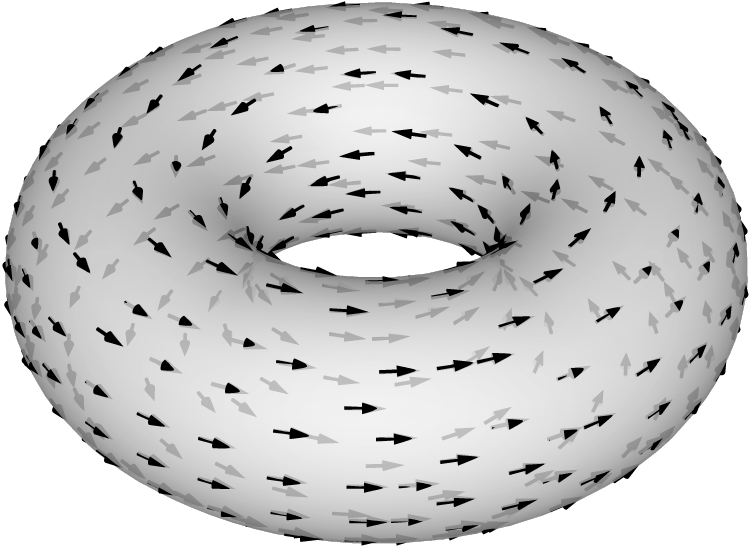}
	\caption{Phase portrait of the sliding vector field $Z^s$.}
	\label{torodeslize}
\end{figure}

Now we can prove Theorem \ref{propfinal1}.

\begin{proof}[Proof of Theorem \ref{propfinal1}]Note that by the final part of the proof of Lemma \ref{lema-svt}, if  $Z_{X_-X_+}\in \mathfrak{Z}^{l}_{\mathcal{I}}$ and $a_{2,1}+b_{2,1}\neq 0$, then the solutions of $Z^s$ are circles, that can be obtained as the intersection of horizontal planes and the torus. This concludes the proof.
\end{proof}

\begin{remark}Note that the results of this section are valid for the torus given by the equation
$$(x^2+y^2+z^2+R^2-r^2)^2=4R^2(x^2+y^2),$$
for $R>0$ and $r>0$. 
\end{remark}
 
\section{Classification of the tangency sets on the torus}\label{sectang}

In this section, we present a classification for the set of tangency points on the torus when $Z_{X_-X_+}\in \mathfrak{Z}^{l}_{\mathcal{I}}$.

We remark that the tangency points of the vector fields $X_+$ and $X_-$ coincide, because $Z_{X_-X_+}$ is inelastic over $T^2$. Hence, it is sufficient to study the tangency either of $X_+$ or $X_-$. In this text, we consider $X_+$.

From Lemma \ref{lema1}, we have that
\[X_+h(x,y,z)=q_2(x,y,z)+q_4(x,y,z),\]
where
$$q_2(x,y,z)=-32[a_{1,1}x^2+a_{2,2}y^2+(a_{1,2}+a_{2,1})xy+a_{1,3}xz+a_{2,3}yz]$$
and
\begin{align*}
	q_4(x,y,z)&=4(x^2+y^2+z^2+3)[a_{1,1}x^2+a_{2,2}y^2+a_{3,3}z^2+(a_{1,2}+a_{2,1})xy\\ 
	&\quad+(a_{1,3}+a_{3,1})xz+(a_{2,3}+a_{3,2})yz]\\ 
	&=4(x^2+y^2+z^2+3)Q_2(x,y,z),
\end{align*}
being
$$Q_2(x,y,z)=(x,y,z)^T  \left( \begin {array}{ccc} 
a_{1,1} &a_{1,2} & a_{1,3}\\ \noalign{\medskip}
a_{2,1} & a_{2,2} & a_{2,3}\\\noalign{\medskip}
a_{3,1} & a_{3,2} & a_{3,3}
\end {array} \right)  (x,y,z). $$

It is hard to determine the zeros of $X_+h$ for general functions $q_2$ and $q_4$, so we discuss some particular choices for $q_4$ and $a_{i,j}$ with $i,j \in \{1,2,3\}$.

\begin{remark}\label{obstang}
We are going to analyze the intersection between the torus $T^2$ and some planes. We have that $T^2=h^{-1}(\{0\})$, thus
$$h(x,y,z)=0 \Rightarrow (x^2+y^2+z^2+3)^2=16(x^2+y^2) \Rightarrow$$
$$ z=\pm \sqrt{4\sqrt{x^2+y^2}-x^2-y^2-3}\; \mbox{or}\; z=\pm \sqrt{-4\sqrt{x^2+y^2}-x^2-y^2-3}.$$
In this way, given the plane $x=\alpha y$, we get
$$z=\pm \sqrt{4\sqrt{(\alpha y)^2+y^2}-(\alpha y)^2-y^2-3} \Rightarrow z^2=4\sqrt{\alpha^2+1}y-(\alpha^2+1)y^2-3$$
$$\Rightarrow z^2+(\alpha^2+1)y^2-4\sqrt{\alpha^2+1}y+4=-3+4 \Rightarrow z^2+(\alpha^2+1)\left(y^2-\frac{4}{\sqrt{\alpha^2+1}}y+\frac{4}{\alpha^2+1}\right)=1 $$
$$\Rightarrow z^2+(\alpha^2+1)\left(y-\frac{2}{\sqrt{\alpha^2+1}} \right)^2=1\Rightarrow \frac{z^2}{\alpha^2+1}+\left(y-\frac{2}{\sqrt{\alpha^2+1}} \right)^2=\frac{1}{\alpha^2+1} $$
and 
$$z=\pm \sqrt{-4\sqrt{(\alpha y)^2+y^2}-(\alpha y)^2-y^2-3} \Rightarrow z^2=-4\sqrt{\alpha^2+1}y-(\alpha^2+1)y^2-3$$
$$\Rightarrow z^2+(\alpha^2+1)y^2+4\sqrt{\alpha^2+1}y+4=-3+4 \Rightarrow z^2+(\alpha^2+1)\left(y^2+\frac{4}{\sqrt{\alpha^2+1}}y+\frac{4}{\alpha^2+1}\right)=1$$
$$\Rightarrow z^2+(\alpha^2+1)\left(y+\frac{2}{\sqrt{\alpha^2+1}} \right)^2=1\Rightarrow \frac{z^2}{\alpha^2+1}+\left(y+\frac{2}{\sqrt{\alpha^2+1}} \right)^2=\frac{1}{\alpha^2+1},$$
that is, the intersection between $T^2$ and $x=\alpha y$ is the union of the ellipses $\mathcal{E}_{\tilde{\alpha}}$ and $\mathcal{E}_{\bar{\alpha}}$, respectively. 
Considering the plane $y=\beta x$, we obtain 
$$z=\pm \sqrt{4\sqrt{x^2+(\beta x)^2}-x^2-(\beta x)^2-3} \Rightarrow \frac{z^2}{\beta^2+1}+\left(x-\frac{2}{\sqrt{\beta^2+1}} \right)^2=\frac{1}{\beta^2+1} $$
and 
$$z=\pm \sqrt{4\sqrt{x^2+(\beta x)^2}-x^2-(\beta x)^2-3} \Rightarrow \frac{z^2}{\beta^2+1}+\left(x+\frac{2}{\sqrt{\beta^2+1}} \right)^2=\frac{1}{\beta^2+1},$$
that is, the intersection between $T^2$ and $y=\beta x$ is the union of the ellipses $\mathcal{E}_{\tilde{\beta}}$ and $\mathcal{E}_{\bar{\beta}}$, respectively. 
Taking the plane $x=0$, we get
$$z=\pm \sqrt{4y-y^2-3} \Rightarrow z^2+y^2-4y+4=-3+4 \Rightarrow z^2+(y-2)^2=1,$$
and
$$z=\pm \sqrt{-4y-y^2-3} \Rightarrow z^2+y^2+4y+4=-3+4 \Rightarrow z^2+(y+2)^2=1,$$
that is,  the intersection between $T^2$ and $x=0$ is the union of the circles $C_{\tilde{x}}$ and $C_{\bar{x}}$, respectively. 
Given the plane $y=0$, we obtain
$$z=\pm \sqrt{4x-x^2-3} \Rightarrow z^2+x^2-4x+4=-3+4 \Rightarrow z^2+(x-2)^2=1,$$
and
$$z=\pm \sqrt{-4x-x^2-3} \Rightarrow z^2+x^2+4x+4=-3+4 \Rightarrow z^2+(x+2)^2=1,$$
that is, the intersection between $T^2$ and $y=0$ is the union of the circles $C_{\tilde{y}}$ and $C_{\bar{y}}$, respectively. Finally, considering the plane $z=0$, we get
$$h(x,y,0)=(x^2+y^2+3)^2-16(x^2+y^2)=0 \Rightarrow (x^2+y^2)^2-10(x^2+y^2)+9=0$$
$$\Rightarrow x^2+y^2=1 \quad \mbox{or} \quad x^2+y^2=9,$$
that is,  the intersection between $T^2$ and $z=0$ is the union of the circles $C_{\tilde{z}}$ and $C_{\bar{z}}$, respectively.
\end{remark}

\begin{remark}\label{obstang2}
	Now, we are going to analyze the intersection between the torus $T^2$ and the sphere $x^2+y^2+z^2=\gamma$. We have that $T^2=h^{-1}(\{0\})$, then
	$$h(x,y,z)=0 \Rightarrow (x^2+y^2+z^2+3)^2=16(x^2+y^2).$$
	Thus, given the sphere $x^2+y^2+z^2=\gamma$, we get
	$$(\gamma+3)^2=16(x^2+y^2) \Rightarrow x^2+y^2=\frac{(\gamma+3)^2}{16},$$
	hence, 
	$$z^2=\gamma-\frac{(\gamma+3)^2}{16} \Rightarrow z^2=\frac{16\gamma-(\gamma^2+6\gamma+9)}{16}$$
	$$\Rightarrow z^2=\frac{-(\gamma^2-10\gamma+9)}{16}\Rightarrow z=\pm\frac{\sqrt{-(\gamma-9)(\gamma-1)}}{4}.$$
	Remember that $z\in \mathbb{R}$, thus $(\gamma-9)(\gamma-1)\leq 0$, that is, $1\leq \gamma \leq 9$. Furthermore, we note that $0\leq \sqrt{-(\gamma-9)(\gamma-1)} \leq 4$, since $1\leq \gamma \leq 9$. In this way, $-1\leq z \leq 1$.
	
	Therefore, if $\gamma \in [1,9]$, the intersection between $T^2$ and $x^2+y^2+z^2=\gamma$ is the union of the circles in the planes 
	$$z=\frac{\sqrt{-(\gamma-9)(\gamma-1)}}{4} \quad \mbox{and} \quad z=-\frac{\sqrt{-(\gamma-9)(\gamma-1)}}{4},$$
	with radius $\frac{\gamma+3}{4}$, denoted by $C_{\tilde{\gamma}}$ and $C_{\bar{\gamma}}$, respectively. Otherwise, the intersection is empty.
\end{remark}

\begin{lemma}\label{q4}Consider $q_4(x,y,z)\equiv 0$. Then the tangency set over $T^2$ is the union of four ellipses.
\end{lemma}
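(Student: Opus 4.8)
The plan is to turn the hypothesis $q_4 \equiv 0$ into explicit linear relations among the entries $a_{i,j}$ and then read the tangency set off directly. Since $x^2 + y^2 + z^2 + 3 \geq 3 > 0$ on all of $U$, the identity $q_4 = 4(x^2+y^2+z^2+3)\,Q_2 \equiv 0$ holds if and only if the quadratic form $Q_2$ vanishes identically; comparing coefficients gives
\[
a_{1,1} = a_{2,2} = a_{3,3} = 0, \qquad a_{1,2} + a_{2,1} = 0, \qquad a_{1,3} + a_{3,1} = 0, \qquad a_{2,3} + a_{3,2} = 0.
\]
Substituting these into the formula for $q_2$, the $x^2$, $y^2$ and $xy$ terms drop out, so that $X_+h = q_2 + q_4 = q_2 = -32\,z\,(a_{1,3}x + a_{2,3}y)$ on $U$.

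Since $Z_{X_-X_+}$ is inelastic, the tangency set of $Z_{X_-X_+}$ over $T^2$ equals the zero set of $X_+h$ on $T^2$, namely
\[
\{(x,y,z) \in T^2 : z\,(a_{1,3}x + a_{2,3}y) = 0\} = \bigl(T^2 \cap \{z = 0\}\bigr) \cup \bigl(T^2 \cap \{a_{1,3}x + a_{2,3}y = 0\}\bigr).
\]
I would then invoke Remark \ref{obstang}: the plane $z = 0$ meets $T^2$ in the two circles $C_{\tilde z}$ and $C_{\bar z}$, while the plane $a_{1,3}x + a_{2,3}y = 0$ — which passes through the $z$-axis and hence is of the form $x = \alpha y$ or $y = \beta x$ — meets $T^2$ in two ellipses ($\mathcal{E}_{\tilde\alpha}, \mathcal{E}_{\bar\alpha}$ or $\mathcal{E}_{\tilde\beta}, \mathcal{E}_{\bar\beta}$). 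As a circle is a particular ellipse, the tangency set is the union of four ellipses.

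There is essentially no obstacle in this argument beyond the bookkeeping above; the only subtlety worth a remark is the degenerate subcase $a_{1,3} = a_{2,3} = 0$, in which $q_2 \equiv 0$ and the entire torus becomes the tangency set. This case must be excluded, which is why a genericity hypothesis is needed (or, alternatively, the statement should be read under the standing assumption $(a_{1,3}, a_{2,3}) \neq (0,0)$).
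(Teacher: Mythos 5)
Your proof is correct and follows essentially the same route as the paper's: reduce $q_4\equiv 0$ to the coefficient relations $a_{1,1}=a_{2,2}=a_{3,3}=0$, $a_{1,2}=-a_{2,1}$, $a_{1,3}=-a_{3,1}$, $a_{2,3}=-a_{3,2}$, obtain $X_+h=-32z(a_{1,3}x+a_{2,3}y)$, and read off the four conics via Remark \ref{obstang}. Your explicit exclusion of the degenerate subcase $a_{1,3}=a_{2,3}=0$ (where the whole torus becomes the tangency set) is a worthwhile precision that the paper's proof leaves implicit.
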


\begin{proof}
	We have $q_4(x,y,z)\equiv 0$, that is,
	$$a_{1,1}x^2+a_{2,2}y^2+a_{3,3}z^2+(a_{1,2}+a_{2,1})xy+(a_{1,3}+a_{3,1})xz+(a_{2,3}+a_{3,2})yz=0.$$
	Hence, we obtain
	$$a_{1,1}=a_{2,2}=a_{3,3}=0, \quad a_{1,2}=-a_{2,1}, \quad a_{1,3}=-a_{3,1}, \quad a_{2,3}=-a_{3,2}.$$
	So,
	$$q_2(x,y,z)=-32(a_{3,1}xz+a_{3,2}yz),$$
	that is, $X_+h$ reduces to \[X_+h(x,y,z)=-32a_{3,1}xz-32a_{3,2}yz,\] which implies that $X_+h(x,y,z)=0$ when 
	$$z=0 \quad \mbox{or} \quad a_{3,1}x+a_{3,2}y=0.$$
	
	We know that the tangency points are of the form $(x,y,z)\in T^2$, such that $X_+h(x,y,z)=0$. Thus, if $z=0$, by Remark \ref{obstang}, we get $C_{\tilde{z}}$ and $C_{\bar{z}}$. Now if $a_{3,1}x+a_{3,2}y=0$, by Remark \ref{obstang}, we get
	\begin{itemize}
		\item the ellipses $\mathcal{E}_{\tilde{\alpha}}$ and $\mathcal{E}_{\bar{\alpha}}$ with $\alpha= -\dfrac{a_{3,2}}{a_{3,1}}$, when $a_{3,1}\neq 0$ and $a_{3,2}\neq 0$;
		\item the ellipses $\mathcal{E}_{\tilde{\alpha}}$ and $\mathcal{E}_{\bar{\alpha}}$, with $\alpha=-a_{3,2}$, when $a_{3,1}=0$ and $a_{3,2}\neq 0$;
		\item the ellipses $\mathcal{E}_{\tilde{\beta}}$ and $\mathcal{E}_{\bar{\beta}}$, with $\beta=-a_{3,1}$, when $a_{3,1}\neq 0$ and $a_{3,2}=0$.
	\end{itemize}
	
	Therefore, we obtain that the set of tangency points is formed by four ellipses, which we illustrate in Figure \ref{ftang1}.
\end{proof}

\begin{lemma}\label{tang1}
	Consider $q_4(x,y,z)=4(x^2+y^2+z^2+3)(a_{1,3}+a_{3,1})xz$ and $a_{3,2}=0$. Then, the tangency set over $T^2$ is the union of four or six ellipses.
\end{lemma}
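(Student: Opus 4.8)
The plan is to repeat the strategy of Lemma \ref{q4}: write $X_+h = q_2 + q_4$ explicitly under the two hypotheses, factor the resulting polynomial, and identify each factor's zero set on $T^2$ with one of the ellipses catalogued in Remark \ref{obstang}. First I would impose $q_4(x,y,z) = 4(x^2+y^2+z^2+3)(a_{1,3}+a_{3,1})xz$, which forces the quadratic form $Q_2$ to have the specific shape $a_{1,1}=a_{2,2}=a_{3,3}=0$, $a_{1,2}+a_{2,1}=0$, $a_{2,3}+a_{3,2}=0$, leaving only the $xz$-term; combined with $a_{3,2}=0$ (hence $a_{2,3}=0$), the expression for $q_2$ collapses to $q_2 = -32(a_{1,3}+a_{3,1})xz$ together with any surviving off-diagonal contributions from $a_{3,1}$. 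I would then substitute into $X_+h = q_2 + q_4$ and observe that the whole of $X_+h$ factors through $xz$ times a polynomial of the form $\big(4(x^2+y^2+z^2+3)(a_{1,3}+a_{3,1}) - 32(a_{1,3}+a_{3,1})\big) + (\text{extra terms from } a_{3,1}x^2z\text{-type corrections})$, so that $X_+h = 0$ splits into the cases $x = 0$, $z = 0$, and a sphere-type condition $x^2+y^2+z^2 = \gamma$ for an explicit $\gamma$.

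The next step is purely bookkeeping against the two remarks. The factor $z = 0$ gives the circles $C_{\tilde z}, C_{\bar z}$ by Remark \ref{obstang}; the factor $x = 0$ gives the circles $C_{\tilde x}, C_{\bar x}$ by the same remark. The remaining spherical factor $x^2+y^2+z^2 = \gamma$ is handled by Remark \ref{obstang2}: if the resulting $\gamma$ lies in $[1,9]$ it contributes two more circles $C_{\tilde\gamma}, C_{\bar\gamma}$ (which are ellipses in the degenerate-affine sense used throughout the section), and if $\gamma \notin [1,9]$ the intersection is empty. This dichotomy is exactly what produces "four or six ellipses": four when the sphere is vacuous or coincides with a plane already counted, six when it genuinely contributes a new pair. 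I would make the value of $\gamma$ explicit in terms of $a_{1,3}, a_{3,1}$ (coming from setting $4(x^2+y^2+z^2+3)(a_{1,3}+a_{3,1}) = 32(a_{1,3}+a_{3,1})$, i.e. $\gamma = 5$ when $a_{1,3}+a_{3,1} \ne 0$), check that $5 \in [1,9]$, and note the subcase $a_{1,3}+a_{3,1}=0$ separately, where $X_+h$ degenerates further.

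The main obstacle I anticipate is the factorization bookkeeping when $a_{3,1} \neq 0$: the term $a_{3,1}$ appears both inside $Q_2$ (through the $a_{1,3}+a_{3,1}$ coefficient that is already absorbed into $q_4$) and, via $q_2 = -32[\dots + a_{1,3}xz + \dots]$, only through $a_{1,3}$ — so one must be careful that $q_2$ sees $a_{1,3}$ but not $a_{3,1}$, while $q_4$ sees the sum. Getting these coefficients straight is what determines whether the spherical factor is $x^2+y^2+z^2 = 5$ cleanly or whether an extra linear-in-$z$ correction survives that would instead cut out a plane $a_{3,1}x + (\text{something}) = 0$ and hence a different ellipse family $\mathcal{E}_{\tilde\alpha}, \mathcal{E}_{\bar\alpha}$. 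Once the algebra is pinned down the geometric conclusion is immediate from the two remarks, and I would close with a reference to the illustrating figure, parallel to the end of Lemma \ref{q4}.
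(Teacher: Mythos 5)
Your strategy is exactly the paper's: impose the hypotheses to kill all of $Q_2$ except the $xz$-term, reduce $q_2$, factor $X_+h$ as $xz$ times a radial polynomial, and read off the circles from Remark \ref{obstang} ($x=0$ and $z=0$) and Remark \ref{obstang2} (the spherical factor). One computational slip, which is precisely the bookkeeping hazard you flag in your last paragraph but do not resolve: $q_2$ contains only $-32a_{1,3}xz$ (not $-32(a_{1,3}+a_{3,1})xz$), so the spherical factor is $4(x^2+y^2+z^2+3)(a_{1,3}+a_{3,1})=32a_{1,3}$, giving $\gamma=\dfrac{5a_{1,3}-3a_{3,1}}{a_{1,3}+a_{3,1}}$ rather than $\gamma=5$; the value $5$ occurs only in the subcase $a_{3,1}=0$, and when $a_{1,3}=0$, $a_{3,1}\neq 0$ one gets $\gamma=-3$ and the sphere contributes nothing. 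Since you already phrased the conclusion conditionally on $\gamma\in[1,9]$, the four-or-six dichotomy survives intact once this coefficient is corrected, so the argument goes through and matches the paper's.
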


\begin{proof}
We have $q_4(x,y,z)=4(x^2+y^2+z^2+3)(a_{1,3}+a_{3,1})xz$ and $a_{3,2}=0$, thus
$$a_{1,1}=a_{2,2}=a_{3,3}=0, \quad a_{1,2}=-a_{2,1} \quad \mbox{and} \quad a_{2,3}=-a_{3,2}=0.$$
Then,
$$q_2(x,y,z)=-32a_{1,3}xz,$$
that is, $X_+h$ reduces to
$$ X_+h(x,y,z)=4(x^2+y^2+z^2+3)(a_{1,3}+a_{3,1})xz-32a_{1,3}xz,$$
thus, $X_+h(x,y,z)=0$ when $$xz[-5a_{1,3}+3a_{3,1}+(x^2+y^2+z^2)(a_{1,3}+a_{3,1})]=0.$$
It implies that $$x=0,\quad z=0 \quad \mbox{or} \quad (x^2+y^2+z^2)(a_{1,3}+a_{3,1})=5a_{1,3}-3a_{3,1}.$$

We know that the tangency points are given by $(x,y,z) \in T^2$, such that $X_+h(x,y,z)=0$. Hence, if $x=0$, by Remark \ref{obstang}, we get $C_{\tilde{x}}$ and $C_{\bar{x}}$. 
Now if $z=0$, by Remark \ref{obstang}, we obtain $C_{\tilde{z}}$ and $C_{\bar{z}}$. Finally, if $(x^2+y^2+z^2)(a_{1,3}+a_{3,1})=5a_{1,3}-3a_{3,1}$, by Remark \ref{obstang2},
\begin{itemize}
	\item when $a_{1,3} \neq 0$ and $a_{3,1}\neq 0$, if $\gamma \in [1,9]$, with
	$$\gamma=\frac{5a_{1,3}-3a_{3,1}}{a_{1,3}+a_{3,1}},$$
	we obtain the circles $C_{\tilde{\gamma}}$ and $C_{\bar{\gamma}}$,
	otherwise the intersection is empty;
	\item when $a_{1,3} \neq 0$ and $a_{3,1}= 0$, we get the circles $C_{\tilde{\gamma}}$ and $C_{\bar{\gamma}}$, with $\gamma=5$;
	\item when $a_{1,3} = 0$ and $a_{3,1}\neq 0$, we obtain that the intersection is empty.
\end{itemize} 

Therefore, we get that the tangency set over $T^2$ is formed by four or six ellipses. In figure \ref{ftang2} we illustrate the case where it admits six ellipses. 
\end{proof}	

\begin{figure}[h]
	\begin{subfigure}{0.5\textwidth}
			\centering
			\includegraphics[scale=0.3]{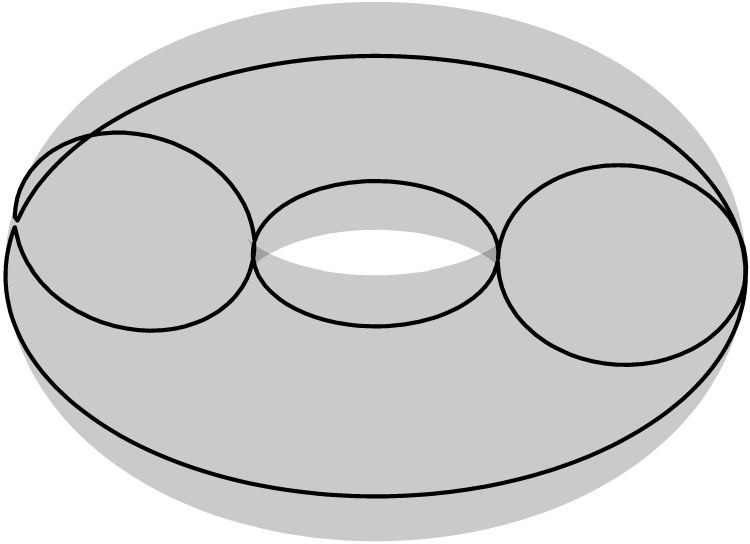}
			\caption{ Four ellipses}
			\label{ftang1}
		\end{subfigure}%
	\begin{subfigure}{0.5\textwidth}
			\centering
			\includegraphics[scale=0.3]{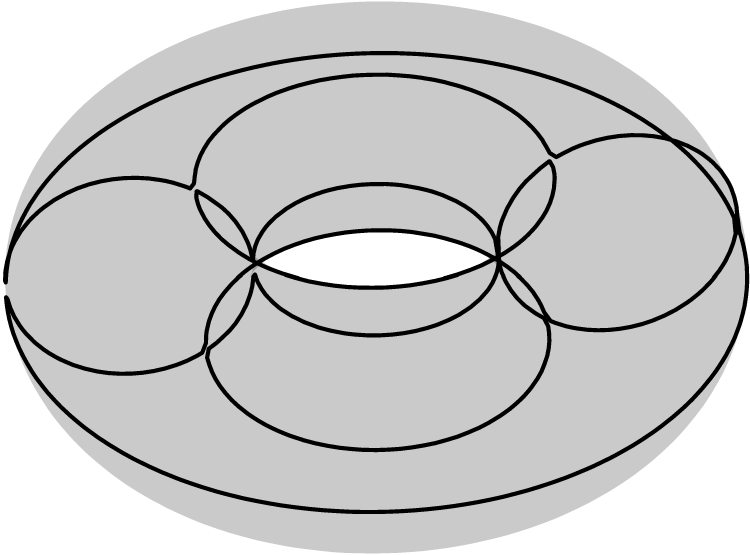}
			\caption{Six ellipses}
			\label{ftang2}
		\end{subfigure}%
	\caption{Tangency set over $T^2$ from Lemma \ref{q4} (a) and Lemma \ref{tang1} (b).}
	\label{lema12}
\end{figure}

\begin{lemma}\label{tang2}
	Consider $q_4(x,y,z)=4(x^2+y^2+z^2+3)(a_{2,3}+a_{3,2})yz$ and $a_{3,1}=0$. Then, the tangency set over $T^2$ is the union of six circles or the union of four circles.
\end{lemma}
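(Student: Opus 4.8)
The plan is to follow the same scheme as in the proof of Lemma \ref{tang1}, with the roles of the variables $x$ and $y$ interchanged. First I would use the hypothesis $q_4(x,y,z)=4(x^2+y^2+z^2+3)(a_{2,3}+a_{3,2})yz$ together with the identity $q_4=4(x^2+y^2+z^2+3)Q_2$ to read off the constraints on the entries of $A$: comparing $Q_2$ with $(a_{2,3}+a_{3,2})yz$ forces
\[
a_{1,1}=a_{2,2}=a_{3,3}=0,\qquad a_{1,2}=-a_{2,1},\qquad a_{1,3}=-a_{3,1},
\]
and since $a_{3,1}=0$ this yields $a_{1,3}=0$. Substituting back into $q_2$ leaves $q_2(x,y,z)=-32a_{2,3}yz$, so that $X_+h$ collapses to
\[
X_+h(x,y,z)=4yz\bigl[(a_{2,3}+a_{3,2})(x^2+y^2+z^2)-5a_{2,3}+3a_{3,2}\bigr].
\]

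Next I would solve $X_+h=0$ on $T^2$ by examining each factor. The factor $y=0$ gives, by Remark \ref{obstang}, the circles $C_{\tilde{y}}$ and $C_{\bar{y}}$; the factor $z=0$ gives, again by Remark \ref{obstang}, the circles $C_{\tilde{z}}$ and $C_{\bar{z}}$; these four circles always belong to the tangency set. The remaining factor is the equation of a sphere $x^2+y^2+z^2=\gamma$, whose intersection with $T^2$ is controlled by Remark \ref{obstang2}. Writing $\gamma=\dfrac{5a_{2,3}-3a_{3,2}}{a_{2,3}+a_{3,2}}$ when $a_{2,3}+a_{3,2}\neq 0$, I would split into cases: if $a_{2,3}\neq 0$ and $a_{3,2}\neq 0$, the sphere contributes the circles $C_{\tilde{\gamma}}$ and $C_{\bar{\gamma}}$ precisely when $\gamma\in[1,9]$ and nothing otherwise; if $a_{2,3}\neq 0$ and $a_{3,2}=0$, one computes $\gamma=5\in[1,9]$, so $C_{\tilde{\gamma}}$ and $C_{\bar{\gamma}}$ are always present; and if $a_{2,3}=0$ and $a_{3,2}\neq 0$, the equation becomes $x^2+y^2+z^2=-3$, which has no real solutions.

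Collecting the subcases, the tangency set over $T^2$ equals either the four circles $C_{\tilde{y}},C_{\bar{y}},C_{\tilde{z}},C_{\bar{z}}$, or those four together with $C_{\tilde{\gamma}}$ and $C_{\bar{\gamma}}$, that is, the union of four or six circles. I do not expect a genuine obstacle here: the argument is a direct transcription of the one for Lemma \ref{tang1}. The only points requiring care are keeping track of which constant plays the role of $\gamma$ in each subcase and verifying the endpoint inequality $1\le\gamma\le 9$, which is exactly the condition separating the four-circle case from the six-circle case.
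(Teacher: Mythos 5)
Your proposal is correct and follows essentially the same route as the paper's proof: read off the vanishing entries of $A$ from $q_4$, factor $X_+h$ into $y$, $z$, and a spherical factor, and invoke Remarks \ref{obstang} and \ref{obstang2}. In fact you carry the variable through correctly — the factorization is $4yz[\cdots]$, giving the circles $C_{\tilde{y}},C_{\bar{y}}$ — whereas the printed proof repeats $xz$ and $C_{\tilde{x}},C_{\bar{x}}$ from Lemma \ref{tang1}, which is a typo your version silently fixes.
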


\begin{proof}
	We have $q_4(x,y,z)=4(x^2+y^2+z^2+3)(a_{2,3}+a_{3,2})yz$ and $a_{3,1}=0$, thus
	$$a_{1,1}=a_{2,2}=a_{3,3}=0, \quad a_{1,2}=-a_{2,1} \quad \mbox{and} \quad a_{1,3}=-a_{3,1}=0.$$
	Hence,
	$$q_2(x,y,z)=-32a_{2,3}xz,$$
	that is, $X_+h$ reduces to
	$$X_+h(x,y,z)=4(x^2+y^2+z^2+3)(a_{2,3}+a_{3,2})xz-32a_{2,3}xz.$$
	So, $X_+h(x,y,z)=0$ when $$xz[-5a_{2,3}+3a_{3,2}+(x^2+y^2+z^2)(a_{2,3}+a_{3,2})]=0.$$
	It implies that $$x=0,\quad z=0 \quad \mbox{or} \quad (x^2+y^2+z^2)(a_{2,3}+a_{3,2})=5a_{2,3}-3a_{3,2}.$$
	
	We know that the tangency points are of the form $(x,y,z) \in T^2$, such that $X_+h(x,y,z)=0$. Thus, if $x=0$, by Remark \ref{obstang}, we obtain $C_{\tilde{x}}$ and $C_{\bar{x}}$. 
	Now if $z=0$, by Remark \ref{obstang}, we get $C_{\tilde{z}}$ and $C_{\bar{z}}$. Finally, if $(x^2+y^2+z^2)(a_{2,3}+a_{3,2})=5a_{2,3}-3a_{3,2}$, by Remark \ref{obstang2},
	\begin{itemize}
		\item when $a_{2,3} \neq 0$ and $a_{3,2}\neq 0$, if $\gamma \in [1,9]$, with
		$$\gamma=\frac{5a_{2,3}-3a_{3,2}}{a_{2,3}+a_{3,2}},$$
		we obtain the circles $C_{\tilde{\gamma}}$ and $C_{\bar{\gamma}}$,
		otherwise the intersection is empty;
		\item when $a_{2,3} \neq 0$ and $a_{3,2}= 0$, we get the circles $C_{\tilde{\gamma}}$ and $C_{\bar{\gamma}}$, with $\gamma=5$;
		\item when $a_{2,3} = 0$ and $a_{3,2}\neq 0$, we obtain that the intersection is empty.
	\end{itemize} 
	
	Therefore, we get that the tangency set over $T^2$ is formed by four or six ellipses. In figure \ref{ftang3} we illustrate the case where it admits six ellipses.
\end{proof}	

\begin{lemma}\label{tang3}
	Consider $q_4(x,y,z)=4(x^2+y^2+z^2+3)(a_{3,1}xz+a_{3,2}yz)$. Then, the tangency set over $T^2$ is the union of four ellipses.
\end{lemma}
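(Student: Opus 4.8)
The plan is to specialize the decomposition $X_+h=q_2+q_4$ to the present hypothesis and to observe that, unlike in Lemma \ref{q4}, here the quadratic part $q_2$ also vanishes identically. First I would compare the prescribed $q_4(x,y,z)=4(x^2+y^2+z^2+3)(a_{3,1}xz+a_{3,2}yz)$ with its general form $4(x^2+y^2+z^2+3)Q_2(x,y,z)$; this forces $Q_2(x,y,z)=a_{3,1}xz+a_{3,2}yz$, and matching coefficients yields $a_{1,1}=a_{2,2}=a_{3,3}=0$, $a_{1,2}=-a_{2,1}$, $a_{1,3}=0$ and $a_{2,3}=0$. Substituting precisely these relations into $q_2(x,y,z)=-32[a_{1,1}x^2+a_{2,2}y^2+(a_{1,2}+a_{2,1})xy+a_{1,3}xz+a_{2,3}yz]$ makes every term drop out, so $q_2\equiv 0$ and hence $X_+h(x,y,z)=4(x^2+y^2+z^2+3)(a_{3,1}xz+a_{3,2}yz)$.

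Since $x^2+y^2+z^2+3>0$ everywhere, on $T^2$ the tangency condition $X_+h(x,y,z)=0$ is equivalent to $z(a_{3,1}x+a_{3,2}y)=0$, that is, $z=0$ or $a_{3,1}x+a_{3,2}y=0$. Therefore the tangency set is the union of the two planar sections of $T^2$ cut out by these planes, where one implicitly takes $(a_{3,1},a_{3,2})\neq(0,0)$ (otherwise $X_+h\equiv 0$ and the whole torus is tangent, a case to be excluded).

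Finally I would identify each section via Remark \ref{obstang}: the plane $z=0$ meets $T^2$ in the circles $C_{\tilde{z}}$ and $C_{\bar{z}}$; if $a_{3,1}\neq 0$ the plane $a_{3,1}x+a_{3,2}y=0$ is the plane $x=\alpha y$ with $\alpha=-a_{3,2}/a_{3,1}$, meeting $T^2$ in the ellipses $\mathcal{E}_{\tilde{\alpha}}$ and $\mathcal{E}_{\bar{\alpha}}$ (the circles $C_{\tilde{x}},C_{\bar{x}}$ being the case $\alpha=0$); and if $a_{3,1}=0$, hence $a_{3,2}\neq 0$, it is the plane $y=0$, meeting $T^2$ in the circles $C_{\tilde{y}}$ and $C_{\bar{y}}$. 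In every case the tangency set is a union of exactly four conics, each an ellipse (circles included as a special instance), which proves the lemma. The computation is entirely routine --- a direct specialization of the method of Lemmas \ref{q4}, \ref{tang1} and \ref{tang2} --- so I do not expect a real obstacle; the only care needed is in tracking which subcase gives genuine ellipses versus circles and in discarding the degenerate configuration $a_{3,1}=a_{3,2}=0$.
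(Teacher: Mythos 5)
Your proposal is correct and follows essentially the same route as the paper: match coefficients to get $a_{1,1}=a_{2,2}=a_{3,3}=a_{1,3}=a_{2,3}=0$ and $a_{1,2}=-a_{2,1}$, conclude $q_2\equiv 0$ so that $X_+h=q_4$ factors as $z(a_{3,1}x+a_{3,2}y)$ times a positive function, and identify the two planar sections of $T^2$ via Remark \ref{obstang}. Your explicit exclusion of the degenerate case $a_{3,1}=a_{3,2}=0$ and your identification of the subcase $a_{3,1}=0$ as the plane $y=0$ (circles $C_{\tilde{y}},C_{\bar{y}}$) are in fact slightly more careful than the paper's own case list, but the argument is the same.
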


\begin{proof}
	We have $q_4(x,y,z)=4(x^2+y^2+z^2+3)(a_{3,1}xz+a_{3,2}yz)$, thus
	$$a_{1,1}=a_{2,2}=a_{3,3}=a_{1,3}=a_{2,3}=0, \quad \mbox{and} \quad a_{1,2}=-a_{2,1}.$$
	Hence,
	$$q_2(x,y,z)=0,$$
	that is, $X_+h$ reduces to
	$$X_+h(x,y,z)=4(x^2+y^2+z^2+3)(a_{3,1}xz+a_{3,2}yz).$$
	So, $X_+h(x,y,z)=0$ when
	$$z=0 \quad \mbox{or} \quad a_{3,1}x+a_{3,2}y=0.$$
	
	We know that the tangency points are given by $(x,y,z) \in T^2$, such that, $X_+h(x,y,z)=0$. Thus, if $z=0$, by Remark \ref{obstang}, we get $C_{\tilde{z}}$ and $C_{\bar{z}}$. Finally, if $ a_{3,1}x+a_{3,2}y=0$, 
	by Remark \ref{obstang}, we get
	\begin{itemize}
		\item the ellipses $\mathcal{E}_{\tilde{\alpha}}$ and $\mathcal{E}_{\bar{\alpha}}$ with $\alpha= -\dfrac{a_{3,2}}{a_{3,1}}$, when $a_{3,1}\neq 0$ and $a_{3,2}\neq 0$;
		\item the ellipses $\mathcal{E}_{\tilde{\alpha}}$ and $\mathcal{E}_{\bar{\alpha}}$, with $\alpha=-a_{3,2}$, when $a_{3,1}=0$ and $a_{3,2}\neq 0$;
		\item the ellipses $\mathcal{E}_{\tilde{\beta}}$ and $\mathcal{E}_{\bar{\beta}}$, with $\beta=-a_{3,1}$, when $a_{3,1}\neq 0$ and $a_{3,2}=0$.
	\end{itemize}	
	
	Therefore, we get that the set of tangency points is formed by four ellipses, as we illustrate in Figure \ref{ftang4}.
\end{proof}	

We note that, by Lemma \ref{q4} and Lemma \ref{tang3}, we get the same set of tangency points when $q_4(x,y,z)=0$ and $q_4(x,y,z)=4(x^2+y^2+z^2+3)(a_{3,1}xz+a_{3,2}yz)$.

\begin{figure}[h]
	\begin{subfigure}{0.5\textwidth}
		\centering
		\includegraphics[scale=0.3]{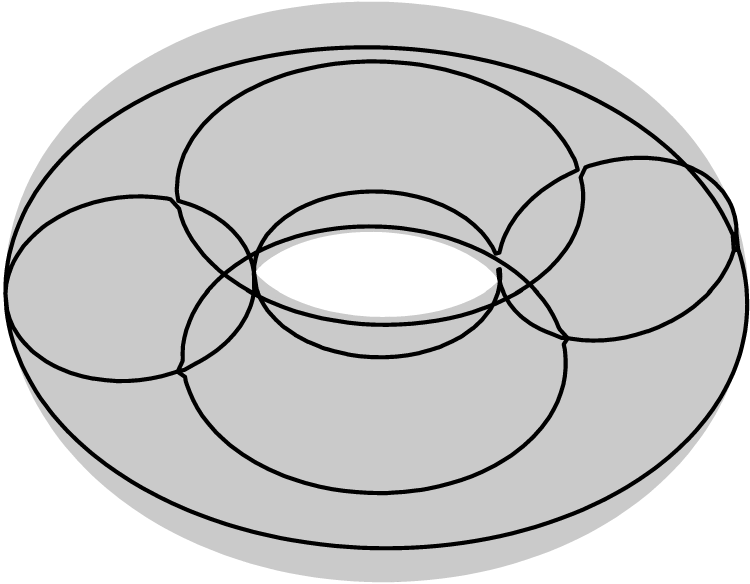}
		\caption{ Six ellipses}
		\label{ftang3}
	\end{subfigure}%
	\begin{subfigure}{0.5\textwidth}
		\centering
		\includegraphics[scale=0.3]{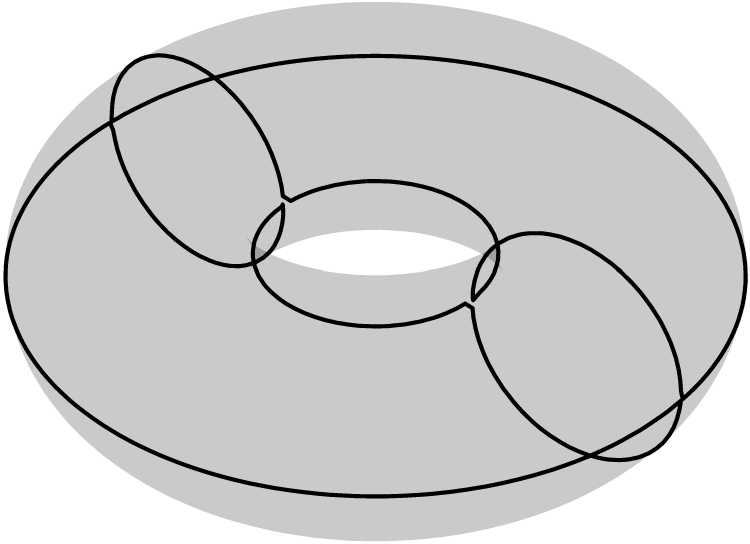}
		\caption{Four ellipses}
		\label{ftang4}
	\end{subfigure}%
	\caption{Tangency set over $T^2$ from Lemma \ref{tang2} (a) and Lemma \ref{tang3} (b).}
\end{figure} 

\begin{lemma}\label{tang4}
	Consider $q_4(x,y,z)=4(x^2+y^2+z^2+3)(a_{1,3}xz+a_{2,3}yz)$. Then, the tangency set over $T^2$ is the union of four or six ellipses.
\end{lemma}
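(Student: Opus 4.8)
Below I sketch the proof I would write for Lemma \ref{tang4}.

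The plan is to follow the scheme used in the proofs of Lemmas \ref{q4}--\ref{tang3}: translate the prescribed form of $q_4$ into linear relations among the entries $a_{i,j}$, use these to factor $X_+h$ into elementary pieces, and then intersect the zero set of each piece with $T^2$ by means of Remarks \ref{obstang} and \ref{obstang2}.

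First I would match coefficients. Since $q_4=4(x^2+y^2+z^2+3)Q_2$, imposing $Q_2(x,y,z)=a_{1,3}xz+a_{2,3}yz$ forces
\[a_{1,1}=a_{2,2}=a_{3,3}=0,\qquad a_{1,2}=-a_{2,1},\qquad a_{3,1}=a_{3,2}=0.\]
Substituting these relations into the formula for $q_2$ gives $q_2(x,y,z)=-32(a_{1,3}xz+a_{2,3}yz)$. As $q_2$ and $q_4$ share the common factor $a_{1,3}xz+a_{2,3}yz=z(a_{1,3}x+a_{2,3}y)$, and $4\cdot 3-32=-20$, one obtains
\[X_+h(x,y,z)=q_2+q_4=4\,z\,(a_{1,3}x+a_{2,3}y)\,(x^2+y^2+z^2-5).\]
Hence $X_+h=0$ on $T^2$ exactly at the points of $T^2$ where $z=0$, or $a_{1,3}x+a_{2,3}y=0$, or $x^2+y^2+z^2=5$. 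The borderline case $a_{1,3}=a_{2,3}=0$ makes $X_+h\equiv 0$ (so the entire torus would be tangent) and is set aside; thus we may assume $(a_{1,3},a_{2,3})\neq(0,0)$.

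Next I would intersect the three loci with the torus. By Remark \ref{obstang}, $\{z=0\}\cap T^2=C_{\tilde z}\cup C_{\bar z}$. By Remark \ref{obstang2}, since $\gamma=5$ lies in $[1,9]$, the sphere $x^2+y^2+z^2=5$ meets $T^2$ along the two circles $C_{\tilde\gamma}$ and $C_{\bar\gamma}$ with $\gamma=5$, contained in the planes $z=\pm1$. And $\{a_{1,3}x+a_{2,3}y=0\}$ is a plane through the $z$-axis, so, again by Remark \ref{obstang}, its intersection with $T^2$ is a pair of curves: the ellipses $\mathcal{E}_{\tilde\alpha},\mathcal{E}_{\bar\alpha}$ with $\alpha=-a_{2,3}/a_{1,3}$ when $a_{1,3}a_{2,3}\neq0$; the circles $C_{\tilde x},C_{\bar x}$ when $a_{2,3}=0$; and the circles $C_{\tilde y},C_{\bar y}$ when $a_{1,3}=0$. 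Putting the three contributions together, the tangency set over $T^2$ is a union of at most six ellipses.

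The final step is the bookkeeping that separates the ``four'' case from the ``six'' case: running through the sub-cases for $(a_{1,3},a_{2,3})$, one compares the defining equations of the three pairs of curves above and records whether two of them coincide (so that the count collapses to four) or whether all six are distinct. I expect this elementary case analysis — the coefficient matching and the slice identifications having already been carried out in the earlier remarks — to be the only real work here; it is exactly parallel to the closing arguments in the proofs of Lemmas \ref{tang1} and \ref{tang2}, and one then draws the resulting configuration as in those cases.
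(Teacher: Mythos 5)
Your proposal is correct and follows essentially the same route as the paper's proof: match coefficients to get $a_{1,1}=a_{2,2}=a_{3,3}=a_{3,1}=a_{3,2}=0$ and $a_{1,2}=-a_{2,1}$, factor $X_+h=4(x^2+y^2+z^2-5)\,z\,(a_{1,3}x+a_{2,3}y)$, and identify the three slices via Remarks \ref{obstang} and \ref{obstang2}. Your explicit exclusion of the degenerate case $a_{1,3}=a_{2,3}=0$ (where $X_+h\equiv 0$) and your identification of the $a_{1,3}=0$ subcase with the circles $C_{\tilde y},C_{\bar y}$ are in fact slightly more careful than the paper's own write-up, but the argument is the same.
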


\begin{proof}
	We have $q_4(x,y,z)=4(x^2+y^2+z^2+3)(a_{1,3}xz+a_{2,3}yz)$, thus
	$$a_{1,1}=a_{2,2}=a_{3,3}=a_{3,1}=a_{3,2}=0, \quad \mbox{and} \quad a_{1,2}=-a_{2,1}.$$
	Hence,
	$$q_2(x,y,z)=-32(a_{1,3}xz+a_{2,3}yz),$$
	that is, $X_+h$ reduces to
	$$X_+h(x,y,z)=4(x^2+y^2+z^2-5)(a_{1,3}xz+a_{2,3}yz).$$
	So, $X_+h(x,y,z)=0$ when
	$$x^2+y^2+z^2=5, \quad z=0, \quad \mbox{or} \quad a_{1,3}x+a_{2,3}y=0.$$
	
	We know that the tangency points are of the form $(x,y,z) \in T^2$, such that $X_+h(x,y,z)=0$. Thus, if $z=0$, by Remark \ref{obstang}, we obtain $C_{\tilde{z}}$ and $C_{\bar{z}}$. Now, if $x^2+y^2+z^2=5$, by Remark \ref{obstang2}, we have the circles $C_{\tilde{\gamma}}$ and $C_{\bar{\gamma}}$, with $\gamma=5$, that is, $x^2+y^2=4$ in the planes $z=1$ and $z=-1$. Finally, if $a_{1,3}x+a_{2,3}y=0$, 
	by Remark \ref{obstang}, we obtain
	\begin{itemize}
		\item the ellipses $\mathcal{E}_{\tilde{\alpha}}$ and $\mathcal{E}_{\bar{\alpha}}$ with $\alpha= -\dfrac{a_{2,3}}{a_{1,3}}$, when $a_{1,3}\neq 0$ and $a_{2,2}\neq 0$;
		\item the ellipses $\mathcal{E}_{\tilde{\alpha}}$ and $\mathcal{E}_{\bar{\alpha}}$, with $\alpha=-a_{2,3}$, when $a_{1,3}=0$ and $a_{2,3}\neq 0$;
		\item the ellipses $\mathcal{E}_{\tilde{\beta}}$ and $\mathcal{E}_{\bar{\beta}}$, with $\beta=-a_{1,3}$, when $a_{1,3}\neq 0$ and $a_{2,3}=0$.
	\end{itemize}	
	
	Therefore, we have that the set of tangency points is formed by four or six ellipses. In Figure \ref{ftang5} we illustrate the case where it admits six ellipses. 
\end{proof}

\begin{lemma}\label{tang5}
	Consider $q_4(x,y,z)=4a_{3,3}z^2(x^2+y^2+z^2+3)$ and $a_{3,1}=a_{3,2}=0$. Then, the tangency set over $T^2$ is the union of two circles.
\end{lemma}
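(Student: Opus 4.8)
The plan is to follow the same scheme used in Lemmas~\ref{q4}--\ref{tang4}: translate the prescribed form of $q_4$ into linear relations among the entries $a_{i,j}$, simplify $X_+h$ accordingly, and then read off its zero set on $T^2$ from Remark~\ref{obstang}.

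First I would match the prescribed expression $q_4(x,y,z)=4a_{3,3}z^2(x^2+y^2+z^2+3)$ against the general formula
$$q_4(x,y,z)=4(x^2+y^2+z^2+3)\left[a_{1,1}x^2+a_{2,2}y^2+a_{3,3}z^2+(a_{1,2}+a_{2,1})xy+(a_{1,3}+a_{3,1})xz+(a_{2,3}+a_{3,2})yz\right].$$
Since $x^2+y^2+z^2+3\geq 3>0$, this equality of polynomials forces the quadratic forms to coincide, i.e. $a_{1,1}=a_{2,2}=0$, $a_{1,2}=-a_{2,1}$, $a_{1,3}=-a_{3,1}$ and $a_{2,3}=-a_{3,2}$. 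Combined with the standing hypothesis $a_{3,1}=a_{3,2}=0$, we also get $a_{1,3}=a_{2,3}=0$.

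Next I would substitute these relations into $q_2(x,y,z)=-32[a_{1,1}x^2+a_{2,2}y^2+(a_{1,2}+a_{2,1})xy+a_{1,3}xz+a_{2,3}yz]$; every coefficient vanishes, so $q_2\equiv 0$ and therefore $X_+h(x,y,z)=q_4(x,y,z)=4a_{3,3}z^2(x^2+y^2+z^2+3)$. Using once more that $x^2+y^2+z^2+3>0$, and the generic assumption $a_{3,3}\neq 0$, the equation $X_+h(x,y,z)=0$ is equivalent to $z=0$. Intersecting with $T^2$, the last case of Remark~\ref{obstang} gives exactly $C_{\tilde{z}}\cup C_{\bar{z}}$, the union of two circles, which is the assertion.

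The computation is routine; the only delicate point is the appeal to genericity. If $a_{3,3}=0$, then $X_+h\equiv 0$ on all of $\mathbb{R}^3$, so the whole torus would be a tangency set and, by the inelastic relation, $X_-h\equiv 0$ as well --- a situation excluded by Lemma~\ref{lema-svt}, which guarantees $T^2$ is a sliding or escape region. Thus the hypothesis is to be read with $a_{3,3}\neq 0$, and under that reading the two-circle conclusion follows immediately. As a consistency check, note that $X_+h=4a_{3,3}z^2(x^2+y^2+z^2+3)$ has the constant sign of $a_{3,3}$ off the plane $z=0$, in agreement with $T^2$ being entirely of sliding (or escape) type away from the tangency circles.
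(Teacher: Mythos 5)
Your proof is correct and follows essentially the same route as the paper: derive the coefficient relations from the form of $q_4$, observe that $q_2\equiv 0$, and conclude that $X_+h=4a_{3,3}z^2(x^2+y^2+z^2+3)$ vanishes on $T^2$ exactly on the two circles $C_{\tilde z}\cup C_{\bar z}$ of Remark~\ref{obstang}. Your explicit handling of the degenerate case $a_{3,3}=0$ is a small but welcome addition that the paper leaves implicit.
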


\begin{proof}
	We have $q_4(x,y,z)=4a_{3,3}z^2(x^2+y^2+z^2+3)$ and $a_{3,1}=a_{3,2}=0$, then
	$$a_{1,1}=a_{2,2}=0,\quad a_{1,3}=-a_{3,1}=0, \quad a_{2,3}=-a_{3,2}=0 \quad \mbox{and} \quad a_{1,2}=-a_{2,1}.$$
	Hence,
	$$q_2(x,y,z)=0,$$
	that is, $X_+h$ reduces to
	$$X_+h(x,y,z)=4a_{3,3}z^2(x^2+y^2+z^2+3).$$
	So, $X_+h(x,y,z)=0$ when $z=0$. 
	
	We know that the tangency points are given by $(x,y,z) \in T^2$, such that $X_+h(x,y,z)=0$. Thus, when $z=0$, by Remark \ref{obstang}, we obtain $C_{\tilde{z}}$ and $C_{\bar{z}}$.
	
	Therefore, we have that the set of tangency points is formed by two circles, which we illustrate in Figure \ref{ftang6}.
\end{proof}

\begin{lemma}\label{tang6}
	Consider $q_4(x,y,z)=4(x^2+y^2+z^2+3)[a_{1,1}x^2+a_{2,2}y^2+(a_{1,2}+a_{2,1})xy]$ and $a_{1,3}=a_{2,3}=0$. Then the tangency set over $T^2$ is the union of two, four or six ellipses.
\end{lemma}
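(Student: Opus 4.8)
The plan is to proceed exactly as in the proofs of Lemmas \ref{q4}--\ref{tang5}: first read off from the prescribed shape of $q_4$ the constraints it imposes on the entries $a_{i,j}$, then compute $q_2$, factor $X_+h=q_2+q_4$, and finally identify the zero set of $X_+h$ on $T^2$ by means of Remarks \ref{obstang} and \ref{obstang2}.

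Comparing the hypothesis
$$q_4(x,y,z)=4(x^2+y^2+z^2+3)[a_{1,1}x^2+a_{2,2}y^2+(a_{1,2}+a_{2,1})xy]$$
with the general expression for $q_4$ recorded before Lemma \ref{q4}, the absence of the $z^2$, $xz$ and $yz$ monomials forces $a_{3,3}=0$, $a_{1,3}+a_{3,1}=0$ and $a_{2,3}+a_{3,2}=0$; together with the assumption $a_{1,3}=a_{2,3}=0$ this gives $a_{1,3}=a_{2,3}=a_{3,1}=a_{3,2}=a_{3,3}=0$. Writing $P(x,y)=a_{1,1}x^2+a_{2,2}y^2+(a_{1,2}+a_{2,1})xy$, one then has $q_2=-32\,P(x,y)$, and hence the factorization
$$X_+h(x,y,z)=q_2+q_4=4\,P(x,y)\,(x^2+y^2+z^2-5).$$
So the tangency set is the intersection of $T^2$ with the union of the sphere $x^2+y^2+z^2=5$ and the zero locus of the binary quadratic form $P$.

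The first piece is immediate: by Remark \ref{obstang2} with $\gamma=5\in[1,9]$ the sphere meets $T^2$ in the two circles $C_{\tilde\gamma},C_{\bar\gamma}$ of radius $2$ lying in the planes $z=\pm1$, so this contributes two curves in every case. For the locus $P(x,y)=0$ I would classify according to the discriminant $\Delta=(a_{1,2}+a_{2,1})^2-4a_{1,1}a_{2,2}$ of $P$ (discarding the degenerate case $P\equiv0$, in which the whole torus is tangency and which we treat as non-generic): if $\Delta>0$, then $P$ splits into two distinct real linear factors, its zero locus is a pair of planes through the $z$-axis, and by Remark \ref{obstang} each such plane meets $T^2$ in two ellipses, giving four ellipses; if $\Delta=0$, there is one repeated real linear factor, hence one plane and two ellipses; and if $\Delta<0$, the form is definite, its real zero locus is the line $x=y=0$, which misses $T^2$, so nothing is added. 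As in the earlier lemmas one should list the sub-cases $a_{1,1}=0$ (resp.\ $a_{2,2}=0$) separately, where $P$ factors at once as $y\,(c\,x+a_{2,2}y)$ with $c=a_{1,2}+a_{2,1}$ (resp.\ $x\,(a_{1,1}x+c\,y)$).

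Finally, since every ellipse coming from $P=0$ lies in a plane containing the $z$-axis while $C_{\tilde\gamma},C_{\bar\gamma}$ lie in the horizontal planes $z=\pm1$, no curve is counted twice, and summing we obtain a tangency set of two ellipses (when $\Delta<0$), four ellipses (when $\Delta=0$) or six ellipses (when $\Delta>0$). The only genuinely new point relative to Lemmas \ref{q4}--\ref{tang5} is the analysis of $P(x,y)=0$: here the second factor of $X_+h$ is a true binary quadratic form rather than a single linear form, so the final count depends on its real factorization type, and the main care is in organizing the discriminant sub-cases correctly (and in excluding $P\equiv0$); everything else is a routine transcription of the computation above.
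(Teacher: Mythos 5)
Your proposal is correct and follows essentially the same route as the paper: the same coefficient constraints, the same factorization $X_+h=4\,[a_{1,1}x^2+a_{2,2}y^2+(a_{1,2}+a_{2,1})xy]\,(x^2+y^2+z^2-5)$, and the same use of Remarks \ref{obstang} and \ref{obstang2} to identify the curves on $T^2$. The only difference is presentational: you organize the analysis of the quadratic factor by the sign of its discriminant (which also makes explicit why the count is exactly two, four, or six), whereas the paper enumerates the sub-cases by which coefficients vanish.
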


\begin{proof}
	We have $4(x^2+y^2+z^2+3)[a_{1,1}x^2+a_{2,2}y^2+(a_{1,2}+a_{2,1})xy]$ and $a_{1,3}=a_{2,3}=0$, thus
	$$a_{3,3}=0, \quad a_{1,3}=-a_{3,1}=0 \quad \mbox{and} \quad a_{2,3}=-a_{3,2}=0.$$
	Hence,
	$$q_2(x,y,z)=-32[a_{1,1}x^2+a_{2,2}y^2+(a_{1,2}+a_{2,1})xy],$$
	that is, $X_+h$ reduces to
	$$X_+h(x,y,z)=4(x^2+y^2+z^2-5)[a_{1,1}x^2+a_{2,2}y^2+(a_{1,2}+a_{2,1})xy].$$
	So, $X_+h(x,y,z)=0$ when
	$$x^2+y^2+z^2=5 \quad \mbox{or} \quad a_{1,1}x^2+a_{2,2}y^2+(a_{1,2}+a_{2,1})xy=0.$$
	
	We know that the tangency points are of the form $(x,y,z) \in T^2$, such that $X_+h(x,y,z)=0$. Thus, if $x^2+y^2+z^2=5$, by Remark \ref{obstang2}, we have the circles $C_{\tilde{\gamma}}$ and $C_{\bar{\gamma}}$, with $\gamma=5$, that is, $x^2+y^2=4$ in the planes $z=1$ and $z=-1$. Finally, if $a_{1,1}x^2+a_{2,2}y^2+(a_{1,2}+a_{2,1})xy=0$, 
	by Remark \ref{obstang}, we obtain
	\begin{itemize}
		\item the ellipses $\mathcal{E}_{\tilde{\alpha}_+}$, $\mathcal{E}_{\bar{\alpha}_+}$, $\mathcal{E}_{\tilde{\alpha}_-}$ and $\mathcal{E}_{\bar{\alpha}_-}$, with 
		$$\alpha_{\pm}= \frac{-a_{2,1}-a_{1,2}\pm \sqrt{(a_{2,1}+a_{1,2})^2-4a_{1,1}a_{2,2}}}{2a_{1,1}},$$
		when $a_{i,j}\neq 0$ for $i,j\in \{1,2\}$ and $(a_{2,1}+a_{1,2})^2-4a_{1,1}a_{2,2}>0$;
		\item the ellipses $\mathcal{E}_{\tilde{\alpha}_+}$, $\mathcal{E}_{\bar{\alpha}_+}$, $\mathcal{E}_{\tilde{\alpha}_-}$ and $\mathcal{E}_{\bar{\alpha}_-}$, with 
		$$\alpha_{\pm}= \frac{-a_{1,2}\pm \sqrt{(a_{1,2})^2-4a_{1,1}a_{2,2}}}{2a_{1,1}},$$
		when $a_{2,1}=0$, $a_{i,j}\neq 0$ for $i,j\in \{1,2\}$ and $(a_{1,2})^2-4a_{1,1}a_{2,2}>0$;
		\item the ellipses $\mathcal{E}_{\tilde{\alpha}_+}$, $\mathcal{E}_{\bar{\alpha}_+}$, $\mathcal{E}_{\tilde{\alpha}_-}$ and $\mathcal{E}_{\bar{\alpha}_-}$, with 
		$$\alpha_{\pm}= \frac{-a_{2,1}\pm \sqrt{(a_{2,1})^2-4a_{1,1}a_{2,2}}}{2a_{1,1}},$$
		when $a_{1,2}=0$, $a_{i,j}\neq 0$ for $i,j\in \{1,2\}$ and $(a_{2,1})^2-4a_{1,1}a_{2,2}>0$;
		\item the ellipses $\mathcal{E}_{\tilde{\alpha}}$ and  $\mathcal{E}_{\bar{\alpha}}$, with 
		$$\alpha= -\frac{a_{2,1}+a_{1,2}}{a_{1,1}},$$
		and the circles $C_{\tilde{x}}$ and $C_{\bar{x}}$,
		when $a_{2,2}=0$ and $a_{i,j}\neq 0$ for $i,j\in \{1,2\}$;
		\item the ellipses $\mathcal{E}_{\tilde{\beta}}$ and  $\mathcal{E}_{\bar{\beta}}$, with 
		$$\beta= -\frac{a_{2,1}+a_{1,2}}{a_{2,2}},$$
		and the circles $C_{\tilde{y}}$ and $C_{\bar{y}}$,
		when $a_{1,1}=0$ and $a_{i,j}\neq 0$ for $i,j\in \{1,2\}$;
		\item the ellipses $\mathcal{E}_{\tilde{\alpha}_+}$, $\mathcal{E}_{\bar{\alpha}_+}$, $\mathcal{E}_{\tilde{\alpha}_-}$ and $\mathcal{E}_{\bar{\alpha}_-}$, with 
		$$\alpha_{\pm}= \pm\sqrt{-\frac{a_{2,2}}{a_{1,1}}},$$
		when $a_{1,2}=a_{2,1}=0$, $a_{1,1}\neq 0$, $a_{2,2}\neq 0$ and $\dfrac{a_{2,2}}{a_{1,1}}<0$;
		\item the ellipses $\mathcal{E}_{\tilde{\alpha}}$ and  $\mathcal{E}_{\bar{\alpha}}$, with 
		$\alpha= -\dfrac{a_{2,1}}{a_{1,1}},$
		and the circles $C_{\tilde{x}}$ and $C_{\bar{x}}$,
		when $a_{2,2}=a_{1,2}=0$, $a_{1,1}\neq 0$ and $a_{2,1}\neq 0$;
		\item the ellipses $\mathcal{E}_{\tilde{\alpha}}$ and  $\mathcal{E}_{\bar{\alpha}}$, with 
		$\alpha= -\dfrac{a_{1,2}}{a_{1,1}},$
		and the circles $C_{\tilde{x}}$ and $C_{\bar{x}}$,
		when $a_{2,2}=a_{2,1}=0$, $a_{1,1}\neq 0$ and $a_{1,2}\neq 0$;
		\item the ellipses $\mathcal{E}_{\tilde{\beta}}$ and  $\mathcal{E}_{\bar{\beta}}$, with 
		$\beta= -\dfrac{a_{2,1}}{a_{2,2}},$
		and the circles $C_{\tilde{y}}$ and $C_{\bar{y}}$,
		when $a_{1,1}=a_{1,2}=0$, $a_{2,2}\neq 0$ and $a_{2,1}\neq 0$;
		\item the ellipses $\mathcal{E}_{\tilde{\beta}}$ and  $\mathcal{E}_{\bar{\beta}}$, with 
		$\beta= -\dfrac{a_{1,2}}{a_{2,2}},$
		and the circles $C_{\tilde{y}}$ and $C_{\bar{y}}$,
		when $a_{1,1}=a_{2,1}=0$, $a_{2,2}\neq 0$ and $a_{1,2}\neq 0$;
		\item the circles $C_{\tilde{x}}$, $C_{\bar{x}}$, $C_{\tilde{y}}$ and $C_{\bar{y}}$,
		when $a_{1,1}=a_{2,2}=0$ and $a_{1,2}$ or $a_{2,1}$ is nonzero;
		\item the circles $C_{\tilde{x}}$ and $C_{\bar{x}}$,
		when $a_{2,2}=a_{1,2}=a_{2,1}=0$ and $a_{1,1}\neq 0$;
		\item the circles $C_{\tilde{y}}$ and $C_{\bar{y}}$,
		when $a_{1,1}=a_{1,2}=a_{2,1}=0$ and $a_{2,2}\neq 0$.
	\end{itemize}	
	
	Therefore, we have that the set of tangency points is formed by two, four or six ellipses. In Figure \ref{ftang7} we illustrate the case where it admits six ellipses.
\end{proof}

\begin{figure}[h]
	\begin{subfigure}{0.33\textwidth}
			\centering
			\includegraphics[scale=0.3]{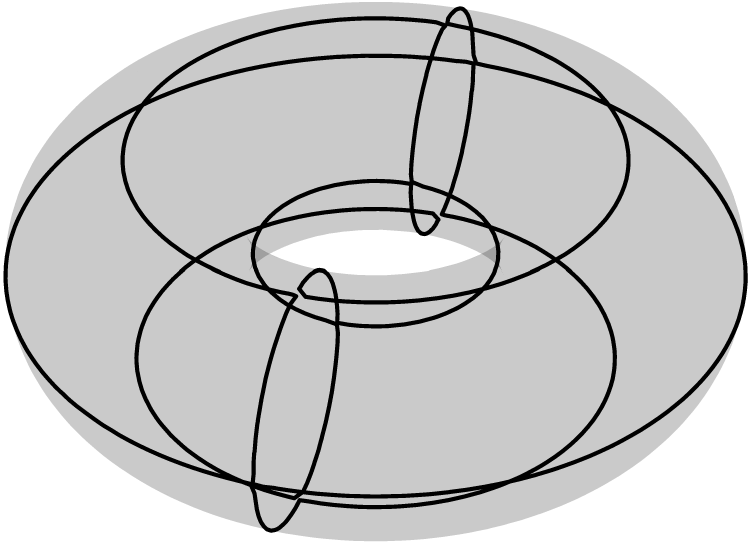}
			\caption{Six ellipes}
			\label{ftang5}
		\end{subfigure}%
	\begin{subfigure}{0.33\textwidth}
			\centering
			\includegraphics[scale=0.3]{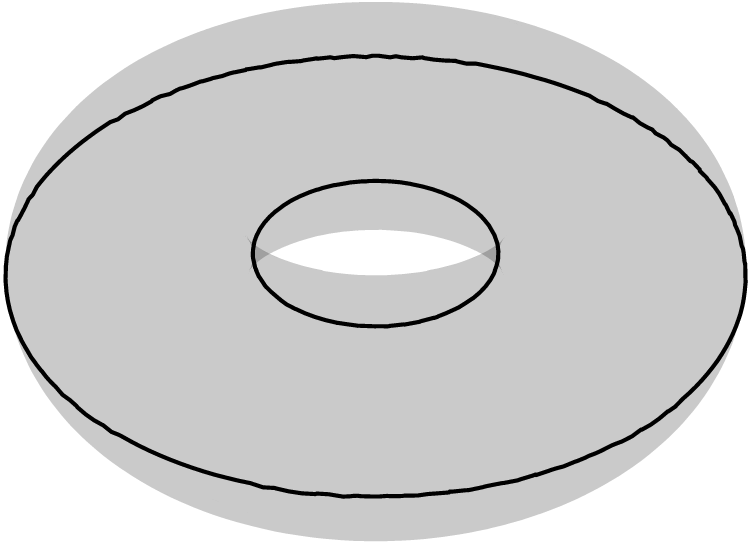}
			\caption{Two circles}
			\label{ftang6}
		\end{subfigure}%
	\begin{subfigure}{0.33\textwidth}
			\centering
			\includegraphics[scale=0.3]{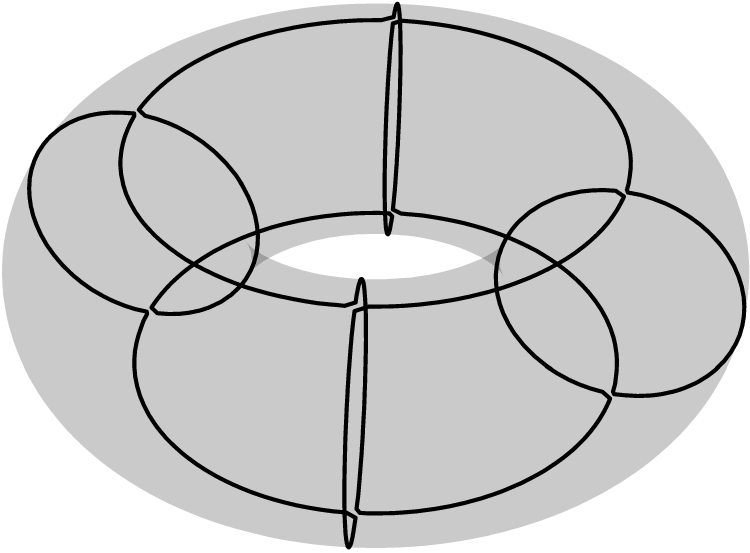}
			\caption{Six ellipses}
			\label{ftang7}
		\end{subfigure}%
	\caption{Tangency set over $T^2$ from Lemma \ref{tang4} (a), Lemma \ref{tang5} (b) and Lemma \ref{tang6} (c).}
	\label{lema567}
\end{figure}

\section{Proof of Theorem \ref{44r4r4r4}} \label{proof2}

From the classification of the tangency sets on the torus made in the previous section, we will show Theorem \ref{44r4r4r4}.



\begin{proof}[Proof of Theorem \ref{44r4r4r4}]
	Consider the set $\mathfrak z$ formed by the vector fields $Z_{X_-X_+}$ that satisfy the conditions of any of the Lemmas \ref{tang1} to \ref{tang6} and {such that the singularities of $X_-,X_+$ are hyperbolic.} Let $Z_{X_-X_+}\in \mathfrak z$. Then, the tangency sets of $Z_{X_-X_+}$ are always formed by a finite number of ellipses, which intersect the trajectories of the sliding vector field in a finite number of points or are exactly equal to the solution.    
	
	Now, consider $\tilde{Z}_{\tilde{X}_-\tilde{X}_+}$ in a small neighborhood of $Z_{X_-X_+}$ in $\mathfrak z$. We note that, as $\tilde{Z}_{\tilde{X}_-\tilde{X}_+}\in \mathfrak z$, this vector field is also inelastic over the torus $T^2$. By the classification given by the Lemmas \ref{tang1} to \ref{tang6}, this vector field admits exactly the same characteristics of $Z_{X_-X_+}$, that is, has all the periodic solutions obtained as the intersection between the horizontal planes and the torus. Therefore, the trajectories are the same and the vector fields are equivalents, with the equivalence defined based on the intersection of the torus with horizontal planes.
\end{proof}

%
%
%
%
%
%

\begin{remark}It is possible to show the previous result considering the torus given by the equation
$$(x^2+y^2+z^2+R^2-r^2)^2=4R^2(x^2+y^2),$$
for $R>0$ and $r>0$. 
\end{remark}

\section{Conclusions}

We proved that under generic conditions the trajectories of $Z_{X_-X_+}\in\mathfrak{Z}^{l}_{\mathcal{I}}$ over the torus are closed curves. Furthermore, we show the existence of a set $\mathfrak{z}\subset\mathfrak{Z}^{l}_{\mathcal{I}}$ such that if $Z_{X_-X_+}\in \mathfrak{z}$ and $\tilde Z_{\tilde{X}_-\tilde{X}_+}\in \mathfrak{z}$ is a vector field in a small neighborhood of $Z_{X_-X_+}\in \mathfrak{z}$, then the respective sliding vector fields $Z^s$ and $\tilde{Z}^s$, defined on the torus, are topologically equivalent.

Remember that the construction of the set $\mathfrak{z}\subset\mathfrak{Z}^{l}_{\mathcal{I}}$ was done from the classification of the tangency sets on the torus, however cases such as
$q_4(x,y,z)=4(x^2+y^2+z^2+3)(a_{3,3}z^2+a_{3,1}xz+a_{3,2}yz)$, where we get $X_+h(x,y,z)=0$ when 
	$$z=0\quad \mbox{or}\quad a_{3,3}z+a_{3,1}x+a_{3,2}y=0,$$
	and $q_4(x,y,z)=4(x^2+y^2+z^2+3)[a_{1,1}x^2+a_{2,2}y^2+(a_{1,2}+a_{2,1})xy+a_{1,3}xz+a_{2,3}yz]$, where we obtain 
	$X_+h(x,y,z)=0$ when 
	$$x^2+y^2+z^3=5\quad \mbox{or}\quad a_{1,1}x^2+a_{2,2}y^2+(a_{1,2}+a_{2,1})xy+a_{1,3}xz+a_{2,3}yz=0,$$
are very complicated due to the possibility to obtain toric sections different from those obtained in the Section \ref{sectang}, when intersecting $T^2$ with $X_+h(x,y,z)=0$, such as the Villarceau's circles. These cases were not considered in the present study and will be treated in a forthcoming paper. See the reference \cite{toric2} for more details.


\section*{Acknowledgements}

R. M. Martins was partially supported by FAPESP grants 2021/08031-9 and 2018/03338-6, CNPq grants 315925/2021-3, 434599/2018-2 and 306287/2024-2. M. D. A. Caldas was partially supported by Coordenação de Aperfeiçoamento de Pessoal de Nível Superior - Brasil (CAPES), grant number 88887.946674/2024-00. This study was financed in part by the Coordenação de Aperfeiçoamento de Pessoal de Nível Superior - Brasil (CAPES) - Finance Code 001.

\bibliographystyle{plain}
\bibliography{bibli}

\begin{thebibliography}{10}

\bibitem{Bro}
B.~Brogliato.
\newblock {\em Nonsmooth Mechanics: Models, Dynamics and Control}.
\newblock Springer, Switzerland, 3 edition, 2016.

\bibitem{ap1}
D.~Chillingworth.
\newblock Discontinuity geometry for an impact oscillator.
\newblock {\em Dynamical Systems}, 17:389--420, 2011.

\bibitem{ap4}
A.~Colombo, M.~di~Bernardo, E.~Fossas, and M.~R. Jeffrey.
\newblock Teixeira singularities in 3d switched feedback control systems.
\newblock {\em Systems and Control Letters}, 59(10):615--622, 2010.

\bibitem{ap2}
E.~Conte, A.~Federici, and J.~P. Zbilut.
\newblock On a simple case of possible non-deterministic chaotic behavior in
  compartment theory of biological observables.
\newblock {\em Chaos Solitons and Fractals}, 22:277--284, 2004.

\bibitem{ap5}
M.~di~Bernardo, C.~J. Budd, A.~R. Champneys, and R.~Kowalczyk.
\newblock {\em Piecewise-Smooth Dynamical Systems – Theory and Applications}.
\newblock Springer-Verlag, 2008.

\bibitem{ap3}
M.~di~Bernardo, A.~Colombo, and E.~Fossas.
\newblock Two-fold singularity in nonsmooth electrical systems.
\newblock In {\em Proceedings of IEEE International Symposium on Circuits and
  Systems}, pages 2713--2716, 2011.

\bibitem{f1}
A.~F. Filippov.
\newblock {\em Differential equations with discontinuous righthand sides},
  volume~18 of {\em Mathematics and its Applications (Soviet Series)}.
\newblock Kluwer Academic Publishers Group, Dordrecht, 1988.

\bibitem{GST}
M.~Guardia, T.~M. Seara, and M.~A. Teixeira.
\newblock Generic bifurcations of low codimension of planar filippov systems.
\newblock {\em Journal of Differential Equations}, 250(4):1967--2023, 2011.

\bibitem{r1}
J.~Llibre, R.~M. Martins, and D.~J. Tonon.
\newblock Limit cycles of piecewise smooth differential equations on two
  dimensional torus.
\newblock {\em Journal of Dynamical and Differential Equations}, 30:1011--1027,
  2018.

\bibitem{survey}
O.~Makarenkov and J.~S.~W. Lamb.
\newblock Dynamics and bifurcations of non smooth systems: A survey.
\newblock {\em Physica D: Nonlinear Phenomena}, 241:1826--1844, 2012.

\bibitem{v2}
M.~Manzatto, D.~D. Novaes, and R.~M. Martins.
\newblock A note on vishik’s normal form.
\newblock {\em Journal of Differential Equations}, 281:442--458, 2021.

\bibitem{r2}
R.~M. Martins and D.~J. Tonon.
\newblock The chaotic behavior of piecewise smooth differential equations on
  two dimensional torus and sphere.
\newblock {\em Dynamical Systems: An International Journal}, 34:356--373, 2019.

\bibitem{marco2}
M.~A. Teixeira.
\newblock Stability conditions for discontinuous vector fields.
\newblock {\em Journal of Differential Equations}, 88:15--29, 1990.

\bibitem{marco3}
M.~A. Teixeira.
\newblock Generic bifurcations of sliding vector fields.
\newblock {\em Journal of Mathematical Analysis and Applications},
  176:436--457, 1993.

\bibitem{marco4}
M.~A. Teixeira.
\newblock Perturbation theory for non-smooth systems.
\newblock In {\em Encyclopedia of Complexity and Systems Science}, page 152.
  2008.

\bibitem{t1}
V.~I. Tkachenko.
\newblock On the existence of a piecewise-smooth invariant torus of an impulse
  system.
\newblock In {\em Methods for investigating differential and
  functional-differential equations}, pages 91--96. Akad. Nauk Ukrain. SSR,
  Inst. Mat., Kiev, 1990.

\bibitem{VO}
P.~L. Varkonyi and Y.~Or.
\newblock Lyapunov stability of a rigid body with two frictional contacts.
\newblock {\em Nonlinear Dynamics}, 88:363--393, 2017.

\bibitem{villanueva2022global}
Y.~Villanueva.
\newblock {\em Global dynamics of inelastic and center-type piecewise smooth
  vector fields in $\mathbb{R}^2$ and $\mathbb{R}^3$}.
\newblock PhD thesis, Universidade Federal de Goiás, 2022.
\newblock Ph.D. thesis in Mathematics.

\bibitem{v1}
S.~M. Vishik.
\newblock Vector fields near the boundary of a manifold.
\newblock {\em Vestnik Moskov Univ. Ser. I, Mat. Meh.}, 27(1):21--28, 1972.

\bibitem{toric2}
T.~R. Werner.
\newblock {\em Rational families of circles and bicircular quartics}.
\newblock PhD thesis, Friedrich-Alexander-Universitaet Erlangen-Nuernberg
  (Germany), 2012.

\end{thebibliography}

\end{document}